\newcolumntype{.}{D{.}{.}{-1}}
\newcolumntype{d}[1]{D{.}{.}{#1}}
\theoremstyle{plain}
\newtheorem{assumption}{Assumption}
\newtheorem{example}{Example}
\newtheorem{definition}{Definition}
\newtheorem{corollary}{Corollary}
\newtheorem{proposition}{Proposition}
\newtheorem{theorem}{Theorem}
\newtheorem{remark}{Remark}
\newtheorem{lemma}{Lemma}
\newenvironment{proof}{\paragraph{Proof:}}{\hfill$\square$}
\newcommand{\argmin}{\operatornamewithlimits{argmin}}
\newcommand{\qed}{\hfill \ensuremath{\Box}}
\newcommand{\ind}{\mbox{$\perp\!\!\!\perp$}}
\newcommand\spacingset[1]{\renewcommand{\baselinestretch}%
  {#1}\small\normalsize}
\newcommand{\blind}{0}
\newcommand{\cX}{\mathcal{X}}
\newcommand{\ba}{\bm{a}}
\newcommand{\bA}{\bm{A}}
\newcommand{\bp}{\bm{p}}
\newcommand{\bq}{\bm{q}}
\newcommand{\bv}{\bm{v}}
\newcommand{\bx}{\bm{x}}
\newcommand{\bX}{\bm{X}}
\newcommand{\cZ}{\mathcal{Z}}
\newcommand{\cD}{\mathcal{D}}
\newcommand{\cY}{\mathcal{Y}}
\newcommand{\bY}{\bm{Y}}
\newcommand{\by}{\bm{y}}
\newcommand{\add}{\textsc{Add}}
\newcommand{\std}{\textsc{Std}}
\newcommand{\cf}{\textsc{Cf}}
\newcommand{\exa}{\textsc{Exa}}
\newcommand{\E}{\mathbb{E}}
\newcommand{\R}{\mathbb{R}}
\newcommand{\bw}{\bm{w}}
\newcommand{\cP}{\mathcal{P}}
\newcommand{\cQ}{\mathcal{Q}}
\newcommand{\bC}{\bm{C}}
\newcommand{\lt}{\left}
\newcommand{\rt}{\right}
\newcommand{\indicator}[1]{\mathds{1}{\left\{ #1 \right\}}}
\newcommand{\Eb}[1]{\mathbb{E}{\left[ #1 \right]}}
\newcommand{\abs}[1]{\left|#1\right|}
\begin{document} 

\newcommand{\tit}{Statistical Decision Theory with Counterfactual Loss}
%
%
\spacingset{1.25}

\if0\blind

{\title{\bf\tit\thanks{We thank Iav Bojinov, Peter Buisseret, Nathan
      Cheng, Neil Christy, Johann Gaebler, Anvit Garg, D. James
      Greiner, Amanda Kowalski, Wenqi Shi, Sooahn Shin, Tomasz
      Strzalecki, Davide Viviano, Th\'{e}o Voldoire, and an anonymous
      reviewer of IQSS's rapidPeer for insightful discussions.}}

\author{Benedikt Koch\thanks{Ph.D. candidate, Department of Statistics, Harvard University. 33 Oxford Street, Cambridge MA 02138. Email: \href{mailto:benedikt_koch@g.harvard.edu}{benedikt\_koch@g.harvard.edu} URL: \href{https://benediktjkoch.github.io}{https://benediktjkoch.github.io}} \and Kosuke
  Imai\thanks{Professor, Department of Government and Department of
    Statistics, Harvard University.  1737 Cambridge Street,
    Institute for Quantitative Social Science, Cambridge MA 02138, U.S.A.
    Email: \href{mailto:imai@harvard.edu}{imai@harvard.edu} URL:
    \href{https://imai.fas.harvard.edu}{https://imai.fas.harvard.edu}}
}

\date{
\today
}

\maketitle

}\fi

\if1\blind
\title{\bf \tit}

\maketitle
\fi

\pdfbookmark[1]{Title Page}{Title Page}

\thispagestyle{empty}
\setcounter{page}{0}
         
\begin{abstract}
  Many researchers apply classical statistical decision theory to
  evaluate treatment choices and learn optimal policies. However,
  because this framework relies solely on realized outcomes under
  chosen actions and ignores counterfactuals, it cannot assess the
  quality of a decision relative to feasible alternatives at the unit
  level, which is an important requirement in some settings. For
  example, in pretrial bail decisions, a judge must balance crime
  prevention upon release against the risk of imposing unnecessary
  burdens on arrestees.  A central challenge in this framework is
  identification: since only one potential outcome is observed per
  unit, counterfactual risk is typically not identifiable. We show
  that, under strong ignorability, counterfactual risk is identifiable
  if and only if the loss is additive in the potential outcomes.  We
  further demonstrate that additive counterfactual losses can yield
  treatment recommendations that differ from those based on standard
  losses when more than two treatment options are available. We show
  that additive counterfactual losses capture not only decision
  accuracy but also decision difficulty, whereas standard losses
  reflect accuracy alone. Finally, we introduce a symbolic linear
  inverse program that determines whether a given counterfactual loss
  yields an identifiable risk, without requiring data.

\bigskip
\noindent {\bf Keywords:} causal inference, decision theory, counterfactual utility, identification, policy learning, potential outcomes, risk
\end{abstract}


\newpage
\section{Introduction}

Statistical decision theory, pioneered by \citet{wald1950statistical},
formulates decision-making as a game against nature. A decision-maker
selects a decision $D=d$ in the face of an unknown state of nature
$\theta$, and incurs a prespecified {\it loss} (or negative utility)
$\ell(d; \theta)$, which quantifies the consequence of the chosen
decision.
A better decision
corresponds to a lower loss.  Given covariates $\bX$, we can define a
decision rule $D = \pi(\bX)$ that maps observed data to decisions. The
performance of a decision rule is commonly evaluated by its {\it
  risk}, that is, the expected loss:
\begin{equation*}
R(\pi; \theta, \ell) \ := \ \E\left[\ell(\pi(\bX); \theta)\right],
\end{equation*}
where the expectation is taken over the distribution of $\bX$. Since
$\theta$ is unknown, the true risk is typically unidentifiable.  This
motivates the use of optimality criteria such as admissibility,
minimax, minimax-regret, and Bayes risk
\citep{wald1950statistical,savage1972foundations,berger1985statistical}.
Statistical decision theory also provides a foundational framework for
classical estimation theory \citep[e.g.,][]{lehmann2006theory} and
hypothesis testing \citep[e.g.,][]{lehmann1986testing}.

In a series of influential papers, \citet{manski2000identification,
  manski2004statistical, manski2011choosing} applies this framework to
the problem of treatment choice.  In its simplest form, the goal is to
choose a treatment $D$ to minimize the expected loss associated with
an outcome of interest $Y$.  Because the treatment may influence the
outcome, the loss $\ell(d; Y(d))$ depends on the potential outcome
$Y(d)$ under the chosen treatment $d \in \cD$ and possibly the
treatment itself, where $\cD$ denotes the support of decision $D$
\citep{neyman1923application,rubin1974estimating}. As before, we can
define a treatment rule $D=\pi(\bX)$ based on observed covariates, and
evaluate its performance via the following risk:
\begin{equation}
  R(\pi; \ell) \ = \ \Eb{\ell(\pi(\bX); Y(\pi(\bX)))}, \label{eq:standard_risk}
\end{equation}
where the expectation is taken over the joint distribution of $\bX$
and $Y(\pi(\bX))$. Since only one potential outcome is observed per unit, i.e.,
$Y=Y(D)$, the risk is generally unidentifiable unless the treatment
rule coincides with the one used to generate the data.  Consequently,
additional assumptions, such as the strong ignorability of treatment
assignment \citep{rosenbaum1983central}, are required to identify the
risk and learn optimal treatment rules
\citep[e.g.,][]{manski2000identification, kitagawa2018should,
  athey2021policy}.

In this paper, we generalize the standard framework of statistical
decision theory for treatment choice by introducing a broader class of
loss functions. Specifically, we consider counterfactual losses, which
depend on the full set of potential outcomes rather than the outcome
under the selected treatment alone.  That is, the loss takes the form
of $\ell(d; \{Y(d^\prime)\}_{d^\prime\in\cD})$.  We refer to this as
a {\it counterfactual loss} because it evaluates each decision in
light of the alternative outcomes that could have occurred had a
different treatment been chosen.  Given this generalized loss, we
define the {\it counterfactual risk} for measuring the performance of
treatment rule $\pi$ as:
\begin{equation}
  R(\pi; \ell) \ = \ \E[\ell(\pi(\bX); \{Y(d)\}_{d \in \cD}) ], \label{eq:counter_risk}
\end{equation}
where the expectation is taken over the distribution of covariates
$\bX$ and all potential outcomes $\{Y(d)\}_{d\in \cD}$. 

Counterfactual losses raise two foundational questions: coherence and
identification. Coherence asks whether decision making based on
counterfactual losses satisfies basic rationality requirements such as
transitivity. In a companion paper
\citep{koch2026axiomaticfoundationdecisionscounterfactual}, we
establish an axiomatic foundation for counterfactual loss by showing
that the preferences it induces are coherent and satisfy a
generalization of the von Neumann-–Morgenstern axioms
\citep{NeumannMorgenstern1944}.  We briefly summarize this result in
Section~\ref{sec:preferences_of_counterfactual_loss}.

In this paper, we take the coherence of counterfactual loss as given
and study its identification. In contrast to the standard risk in
Equation~\eqref{eq:standard_risk}, even under strong ignorability, the
counterfactual risk in Equation~\eqref{eq:counter_risk} is generally
not identifiable, due to its dependence on the joint distribution of
potential outcomes.  We derive a structural condition on the
counterfactual loss that enables the identification of counterfactual
risk (Section~\ref{sec:identification}). Specifically, we consider a
class of counterfactual losses that are {\it additive} in the
potential outcomes. We show that under strong ignorability of
treatment assignment, additivity is both necessary and sufficient for
identifying counterfactual risk {\it differences}, and hence for
identifying an optimal policy within a given policy class.
Importantly, our results extend beyond the binary treatment and
outcome cases that have been a focus of the existing literature.

We next examine how restrictive the additivity condition is
(Section~\ref{sec:expressiveness_add}). When decisions are binary,
i.e., $|\cD| = 2$, we show that every additive counterfactual loss
admits an equivalent standard loss that induces the same ranking over
policies and hence the same optimal decision rule. By contrast, when
the decision problem involves more than two treatment options
($|\cD| > 2$), additive counterfactual losses can induce rankings and
optimal policies that cannot be replicated by any standard loss. This
divergence arises because additive counterfactual losses capture both
the \textit{accuracy} and the \textit{difficulty} of a decision,
whereas standard losses reflect accuracy alone. Furthermore, we show
that, under binary outcomes and a monotonicity restriction, every
counterfactual loss is additive.

Finally, we formulate a symbolic linear inverse problem that offers a
practical test for whether a given loss is additive and hence its risk
identifiable (Section~\ref{sec:symbolic}). This test does not require
any data, and the solution to this linear inverse problem provides a
closed-form decomposition of counterfactual risk into identifiable
marginal distributions of potential outcomes.

\paragraph{Related Literature.}

The application of counterfactual logic to decision-theoretic problems
has appeared in several strands of the literature. One of the most
prominent examples of counterfactual loss is the notion of regret,
$\ell(d; \{Y(d^\prime)\}_{d^\prime \in \cD}) = \max_{d^\prime \in \cD}
Y(d^\prime) - Y(d)$ \citep{savage1972foundations}. Yet, because of the
aforementioned identification challenge, prior work has mostly focused
on the settings with binary treatment and binary outcome. For example,
\citet{coston2020counterfactual} and \citet{ben-michaelDoesAIHelp2024}
consider the counterfactual loss that is a function of the baseline
potential outcome $Y(0)$ alone. A more general binary case has been
studied by \citet{li2019unit,li2022unit, NEURIPS2022_ebcf1bff,
  kallus2022s,mueller2023personalized,ben-michaelPolicyLearningAsymmetric},
who address non-identifiability through partial-identification bounds
and by leveraging observational data in some cases. In addition,
\citet{christy2024startingsmallprioritizingsafety,
  christy2026countingdefiersdesignbasedmodel} circumvent
non-identifiability by evaluating decisions with a finite-sample
maximum-likelihood rule.

The most relevant work in the binary case is \citet{li2019unit} which
provides a sufficient condition for identification.  We show that this
condition follows directly from our general identification result and
it is indeed necessary for the identification of counterfactual risk.
However, our result implies that their condition can be relaxed if the
goal is to identify a counterfactual risk difference and can be
expressed as a mathematically equivalent standard loss.

Extensions to non-binary treatments have been studied by
\citet{li2024probabilities, li2024unit}.  Specifically,
\citet{li2024probabilities} derive partial-identification bounds,
while \citet{li2024unit} propose a non-constructive algorithm that
numerically tests identifiability using experimental data.  In
contrast, we provide a closed-form, analytical characterization of
counterfactual losses that is both necessary and sufficient for the
identification of counterfactual risk.  We also extend these results
to the identification of counterfactual risk difference, which
suffices for learning optimal treatment rules. Our identification
results lead to a symbolic linear inverse problem formulation that can
be used, without data, to verify whether a counterfactual risk (or the
difference between any pair of counterfactual risks) is
identifiable. We further show that in the non-binary treatment case,
an identifiable counterfactual loss need not reduce to a standard
loss.



\section{Illustrative Examples}
\label{sec:examples}

Before presenting our general identification analysis in
Section~\ref{sec:identification}, we use illustrative examples to
highlight the differences between standard and counterfactual losses. We begin with a binary decision example and then consider a trichotomous decision example. Numerical illustrations are provided in Appendix~\ref{app:numerical_illustrations}.

\subsection{Binary Decisions}\label{subsec:hippocratic_oath}

Consider a stylized setting, in which a physician must decide whether to
approve ($D = 1$) a new medical treatment for a population of at-risk
individuals, or not ($D = 0$). The outcome of interest is survival,
denoted by $Y$, where $Y = 1$ indicates survival and $Y = 0$
otherwise. For simplicity, we assume no covariates are observed.  We introduce standard, counterfactual, and additive counterfactual
losses in the context of this example.

\subsubsection{Standard Loss}

Suppose that we evaluate the physician's decision using the standard
loss $\ell^\std(D; Y(D))$, which assigns a value to each realized
decision--outcome pair $(D, Y(D))$. Let $c_1 > 0$ denote the cost of
providing treatment relative to the cost of no intervention $c_0 = 0$.
We use $\ell_y$ to denote the loss associated with the realized
outcome $Y(D)=y$. For example, if the treatment is administered and
the patient dies, the loss is given by
$\ell^\std(1;0) = \ell_0 + c_1$. If no treatment is given and the
patient survives, the loss is $\ell^\std(0;1) = \ell_1$. Since
survival is preferred, we assume $\ell_1 < \ell_0$.

This yields the standard loss function,
\begin{align}\label{eq:ex_standard_loss}
    \ell^\std(D;Y(D)) = \ell_{Y(D)} + c_D.
\end{align}
An optimal decision $d^\ast$ minimizes the standard risk,
\begin{align*}
    d^\ast = \argmin_{d \in \{0, 1\}} R(d; \ell^\std),
\end{align*}
where $R(d; \ell^\std) = \Eb{\ell^\std(d;Y(d))}$.  The classical A/B
test is a special case of this standard loss where
$\ell^\std(D; Y(D)) = -Y(D)$, reducing risk minimization to the
comparison of average survival rates.

\subsubsection{Counterfactual Loss}

The standard loss $\ell^\std$ only considers the realized outcome
under the chosen decision. In contrast, the physician may decide to use
a counterfactual loss that depends on both potential outcomes
simultaneously. This naturally leads to the concept of \emph{principal
  strata}, which classifies individuals according to their joint
potential outcomes \citep{frangakis2002principal}:
\begin{itemize}
  \item \textbf{Never survivors} $(Y(0), Y(1)) = (0, 0)$: patients who
    would not survive regardless of treatment 
  \item \textbf{Responders} $(Y(0), Y(1)) = (0, 1)$: patients who
    would survive only if given the treatment 
  \item \textbf{Harmed} $(Y(0), Y(1)) = (1, 0)$: patients who would
    survive only if not treated 
  \item \textbf{Always survivors} $(Y(0), Y(1)) = (1, 1)$: patients
    who would survive regardless of treatment 
\end{itemize}
Building on this idea, several authors have considered the following
counterfactual loss \citep{li2019unit,
  ben-michaelPolicyLearningAsymmetric,
  christy2024startingsmallprioritizingsafety}:
\begin{equation}\label{eq:asymmetric_loss}
\begin{aligned}
\ell^\cf(D; Y(0), Y(1)) & = (1-Y(0))(1-Y(1))\ell_0 + (1-Y(0))Y(1) \ell_D^\text{R}  \\
        & \qquad + Y(0)(1-Y(1))\ell_{1-D}^\text{H} + Y(0)Y(1)\ell_1 +  c_D.
\end{aligned} 
\end{equation}
This generalizes the standard loss given in
Equation~\eqref{eq:ex_standard_loss}, which is obtained as a special
case by setting $\ell_{D}^{\text{R}}=\ell_D$ and $\ell_{1-D}^{\text{H}}=\ell_{1-D}$.

Unlike the standard loss, therefore, the counterfactual loss
incorporates the information about whether the treatment would alter
the outcome. For example, responders and always survivors both incur
the same standard loss $\ell^\std(1;1) = \ell_1 + c_1$ under the
treatment, but their counterfactual losses may differ (e.g.,
$\ell_1^\text{R} + c_1 < \ell_1 + c_1$ if we reward the correct
decision).  \citet{ben-michaelPolicyLearningAsymmetric} design a
counterfactual loss that reflects a version of the Hippocratic Oath
principle of ``Do no harm'' \citep{wiens2019no}, where killing a
patient is worse than failing to provide life-saving treatment, i.e.,
$\ell_0^{\text{R}} - \ell_1^{\text{R}} < \ell_0^{\text{H}} -
\ell_1^{\text{H}}$.

Counterfactual loss can also reveal heterogeneity that standard loss
may obscure. For example, \citet{mueller2023personalized} exhibit a
setting in which men and women have the same standard risk difference,
$R(1; \ell^\std) - R(0; \ell^\std)$, yet the treatment is safe for
women but harmful for men \citep[see also][]{kallus2022s}. Deploying
such a policy could therefore be ethically problematic, yet this issue
would not be detected by standard loss (and hence by a classical A/B
test).

Despite its ability to express rich preferences, the main limitation
of counterfactual loss is identifiability since principal strata are
never observed \citep{sarvet2023perspectives,dawid2000causal}.  As a
result, the literature primarily focused on partial identification
\citep[e.g.,][]{li2019unit, li2024probabilities,
  ben-michaelPolicyLearningAsymmetric}.

\subsubsection{Additive Counterfactual Loss}\label{sec:add_counterfactual_loss}

\begin{table}[t]
\centering 
\begin{tabular}{l|c|c|c|} 
  \multicolumn{2}{c}{}  & \multicolumn{2}{c}{\textbf{Decision}} \\\cline{3-4}
      \multicolumn{2}{c|}{}  & \multirow{2}{*}{Negative $(D = 0$)}  & \multirow{2}{*}{Positive $(D = 1$)}\\
      \multicolumn{2}{c|}{} & \multicolumn{1}{c|}{} & \multicolumn{1}{c|}{} \\ \cline{2-4}
  & \multirow{2}{*}{Negative ($Y(0) = 0$)} & \cellcolor[HTML]{F25454}{\color[HTML]{333333} False Negative (FN)} & \cellcolor[HTML]{B9EBB7}True Positive \\
 &   & $\ell_0$ &  $\tilde\ell_0 + c_1$ \\ \cline{2-4}
  &  \multirow{2}{*}{Positive ($Y(0) = 1$)} & \cellcolor[HTML]{5BCB5C}True Negative  & \cellcolor[HTML]{FFCCC9}{\color[HTML]{333333} False Positive (FP)}\\
 &   & $\ell_1$ &  $\tilde\ell_{1}+c_1$ \\ \cline{2-4} 
 \multirow{-2}{*}{\textbf{Potential Outcomes}} & \multirow{2}{*}{Negative ($Y(1) = 0$)} & \cellcolor[HTML]{B9EBB7}True Negative & \cellcolor[HTML]{F25454}{\color[HTML]{333333} False Positive (FP)} \\
 &   & $\tilde\ell_{0}$ &  $\ell_{0}+c_1$ \\ \cline{2-4}
 &  \multirow{2}{*}{Positive ($Y(1) = 1$)} & \cellcolor[HTML]{FFCCC9}{\color[HTML]{333333} False Negative (FN)} & \cellcolor[HTML]{5BCB5C}True Positive\\
 &   & $\tilde\ell_{1}$ &  $\ell_{1}+c_1$ \\ \cline{2-4}
 \cline{2-4} 
\end{tabular}
\caption{Confusion matrix indexed by $Y(d)=y_d$ and decision $D=d$. Each cell is assigned the corresponding component of the additive loss. Darker colors indicate realized outcomes, while lighter colors indicate counterfactual outcomes.}
\label{tbl:confusion}
\end{table}

We propose an alternative strategy to non-identifiability by
restricting the class of counterfactual losses to those that are
additive in the potential outcomes.  We now provide an example of
additive counterfactual loss by extending a counterfactual loss
considered in the literature \citep{coston2020counterfactual,
  ben-michaelDoesAIHelp2024} that depends on the baseline potential
outcome $Y(0)$ alone. Under this formulation, the decision is a false
negative if the physician does not provide the treatment and the
patient dies, $(D, Y(0)) = (0, 0)$, in which case the loss is $\ell_0$
since $Y(0)$ is the realized outcome in this case. Conversely, a false
positive occurs when the physician provides the treatment even though
the patient would have survived without it, $(D, Y(0)) = (1, 1)$. In
this case, $Y(0)$ is counterfactual, and we denote the associated loss
as $\tilde\ell_1$.  Under this setup,
\citet{ben-michaelDoesAIHelp2024} propose the following counterfactual
classification loss:
\begin{equation}
\ell(D; Y(0)) = (1 - D)(1 - Y(0))\ell_0 + D \cdot Y(0) \tilde\ell_1 + c_D. \label{eq:classification_loss}
\end{equation}

As illustrated in Table~\ref{tbl:confusion}, this counterfactual loss
can be further generalized. First, we may also incorporate losses
based on the potential outcome under treatment, covering cases where
$(D, Y(1)) = (0,1)$ (false negative) or $(D, Y(1)) = (1,0)$ (false
positive). Second, we may consider rewards (negative losses) for
correct decisions: $(D, Y(0)) = (1,0)$ and $(D, Y(1)) = (1,1)$ are
true positives, while $(D, Y(0)) = (0,1)$ and $(D, Y(1)) = (0,0)$ are
true negatives. Letting $\tilde\ell_0$ denote the loss incurred when a
patient would have died under the alternative (unchosen) decision, a
more general counterfactual loss can be written as:
\begin{equation}
\ell^\add(D; Y(0), Y(1)) = \ell_{Y(D)} + \tilde\ell_{Y(1-D)} + c_D. \label{eq:classification_loss_general}
\end{equation}
This formulation is additive in the potential outcomes.  While it does
not fully capture stratum-specific interactions, additive
counterfactual loss does account for the notion of counterfactual
regret. For example, if $\tilde\ell_0 < \tilde\ell_1$, this reflects a
regret for not choosing the decision that would save the patient.

Despite these differences, as shown later, in the binary decision
case, every additive counterfactual loss has an equivalent standard
loss.  For example, the additive counterfactual loss in
Equation~\eqref{eq:classification_loss_general} is equivalent to
$\ell^{\textsc{Equ}}(D; Y(D)) = \ell_{Y(D)} - \tilde\ell_{Y(D)} +
c_D$, because we have
\[
    \ell^\add(D; Y(0), Y(1)) - \ell^{\textsc{Equ}}(D; Y(D)) = \tilde\ell_{Y(1-D)} + \tilde\ell_{Y(D)} = \tilde\ell_{Y(0)} + \tilde\ell_{Y(1)},
\]
which does not depend on the decision.  Hence,
$R(d; \ell^\add)$ and $R(d; \ell^{\textsc{Equ}})$ induce the same
ordering of treatment rules.

\subsection{Trichotomous Decision}\label{subsec:trichotomous_decision}

We next move beyond binary decisions and illustrate counterfactual additive loss in the case of trichotomous
decisions. We introduce a different stylized example taken from
criminal justice \citep[see e.g.,][]{imai2023experimental}.  Consider a first appearance hearing, where a judge
must decide whether to release an arrestee on their own recognizance
or ROR ($D=0$), impose supervision (e.g., monitoring via an electronic
device) as a condition of release ($D=1$), or detain the arrestee
until trial ($D=2$).  As before, let $Y$ be the binary outcome of
interest, where $Y=0$ indicates a negative outcome (e.g., the arrestee
commits a crime before trial) and $Y=1$ represents a positive outcome
(e.g., no new crime). For simplicity, we assume that detention
prevents crime, i.e., $Y(2) = 1$, though this assumption can be
relaxed. We also assume no covariates are observed.

\subsubsection{Standard Loss}

Suppose a judge seeks to balance the social cost of crime against the costs of supervision and incarceration, which are borne at least in part by arrestees. 
Let $\ell_y$ denote the
loss associated with outcome $Y=y$. Since we would like to avoid
crime, we assume $\ell_1 < \ell_0$.  Let $c_d$ be the cost of decision
$d$. The cost of detention is assumed to be greater than that of
supervision, which in turn is larger than the cost of ROR, i.e.,
$c_0 < c_1 < c_2$.  The corresponding standard loss is given by,
\[
\ell^\std(D; Y(D)) = \ell_{Y(D)} + c_D.
\]

\subsubsection{Counterfactual Loss}

This standard loss does not account for two important considerations a
judge must weigh: \emph{harm to the public} and \emph{harm to the
  arrestee}. Harm to the public arises when a judge's decision leads
to a crime, whereas harm to the arrestee occurs when the decision
imposes unnecessary costs through supervision or detention. It is
natural to assume that a judge seeks to minimize both types of
harm. The following counterfactual loss incorporates these two notions of harm, 
\begin{equation}\label{eq:tri_counterfactual_loss}
\begin{aligned}
  & \ell^\cf(D; Y(0), Y(1), Y(2)) \\
  = \ & \ell_{Y(D)} + c_D + \indicator{D = 0}\, (1 - Y(0)) Y(1) p_0 +
  \indicator{D = 1}\, Y(0)(1 - Y(1)) p_1 + \sum_{k < D} r_{D,k} Y(k),
\end{aligned}
\end{equation}
where $p_d$ is the additional penalty for causing harm to the public
with decision $D=d$ for $d=0,1$ (recall that $D=2$ is assumed to lead
to no crime), and $r_{d,k}$ represents the regret for imposing an
unnecessarily harsh decision $d$ when a more lenient decision $k < d$
would lead to no crime.

\subsubsection{Additive Counterfactual Loss}

Since the additional penalty term for the harm to the public depends
on joint potential outcomes, the counterfactual risk based on
Equation~\eqref{eq:tri_counterfactual_loss} is not identifiable.
However, the regret term for the harm to the arrestee does not involve
joint potential outcomes and hence can be incorporated into an
additive counterfactual loss in the following manner,
\begin{align}\label{eq:tri_additive_loss}
    \ell^\add(D; Y(0), Y(1), Y(2)) &= \ell_{Y(D)} + c_D + \sum_{k < D} r_{D,k} Y(k).
\end{align}

\section{Identification under Strong Ignorability}
\label{sec:identification}

We now formally define counterfactual loss and risk, and introduce the
class of additive counterfactual losses, which depend additively on
the potential outcomes. We show that additivity is both necessary and
sufficient for identification.

\subsection{Setup}\label{sec:setup}

Consider a setting in which, for a unit with pre-treatment covariates
\(\bX\in\cX\), a decision \(D\in\cD:=\{0,1,\ldots,K-1\}\) is chosen
and an outcome \(Y\in\cY:=\{0,1,\ldots,M-1\}\) is observed. Here,
\(\cX\) is an arbitrary measurable space, and we assume $K, M \geq
2$. Let \(Y(d)\in\cY\) denote the potential outcome under decision
\(d\in\cD\). The vector of potential outcomes and covariates has an unknown law,
\[
    (Y(0),\ldots,Y(K-1),\bX)
    \sim P_{Y(\cD),\bX} \in \Delta(\cY^\cD\times\cX),
\]
where \(\Delta(\cY^\cD\times\cX)\) denotes the set of
all probability measures on \(\cY^\cD\times\cX\). We view this law as
describing an unknown superpopulation, while treating potential
outcomes as fixed for a given unit
\citep{neyman1923application,rubin1974estimating}.

The observed data are generated under the realized decision $D$ via a
\emph{treatment assignment mechanism}
$e:\cY^{\cD}\times\cX\to\Delta(\cD)$, where $\Delta(\cD)$ denotes the
set of probability distributions on $\cD$. Given
$P_{Y(\cD),\bX} \in \Delta(\cY^\cD\times\cX)$, define the induced
\emph{joint} law $P$ of $(D, Y(0), \ldots, Y(K-1), \bX)$ as
\begin{align}\label{eq:joint_law}
    P(D = k, Y(\cD) = \by, \bX \in A) = \int_{A} e(k; \by, \bx) \cdot P_{Y(\cD) \mid \bX}(Y(\cD) = \by \mid \bX = \bx) dP_{\bX}(\bX=\bx),
\end{align}
for all $k \in \cD, \by = (y_0, \ldots, y_{K-1}) \in \cY^\cD$ and measurable $A \subseteq \cX$, where $Y(\cD) = (Y(0), \ldots, Y(K-1))$.  

We study the identifiability of counterfactual risk under the standard consistency and strong ignorability assumptions \citep{rosenbaum1983central} on the joint law $P$.
\begin{assumption}[Consistency]\label{ass:consistency}
The observed outcome satisfies: 
$Y = Y(D)$.
\end{assumption}

\begin{assumption}[Strong Ignorability]\label{ass:ignorability} The observed decision satisfies:
\begin{enumerate}[label=(\alph*)]
\item \label{ass:unconfoundedness} Unconfoundedness: $D \ind \{Y(d)\}_{d \in \cD} \mid \bX$; equivalently,
  $e(d;\by,\bx)=e(d;\bx)$ for every $d\in\cD$,
  $P_{Y(\cD),\bX}$-almost surely.
\item \label{ass:overlap} Overlap: $e(d;\bx)>0$ for every $d\in\cD$, $P_{\bX}$-almost surely.
\end{enumerate}
\end{assumption}
Assumption~\ref{ass:consistency} links the observed outcome to the
potential outcomes through the realized decision.
Assumption~\ref{ass:ignorability}\ref{ass:unconfoundedness} implies
that, conditional on covariates, the observed decision $D$ is
independent of the potential outcomes while
Assumption~\ref{ass:ignorability}\ref{ass:overlap} requires that each
decision occurs with positive probability conditional on covariates.

Given $P_{Y(\cD),\bX} \in \Delta(\cY^\cD \times \cX)$, define
$\mathcal{E}(P_{Y(\cD),\bX})$ as the set of assignment mechanisms that
satisfy Assumption~\ref{ass:ignorability} with respect to
$P_{Y(\cD),\bX}$.
Define the following set of all joint laws compatible with our assumptions, 
\begin{align*}
    \cP := \{P: P = e \cdot P_{Y(\cD),\bX}, P_{Y(\cD),\bX} \in
  \Delta(\cY^\cD \times \cX), e \in \mathcal{E}(P_{Y(\cD), \bX})\},
\end{align*}
where $P=e\cdot P_{Y(\cD),\bX}$ denotes the joint law induced by $(e, P_{Y(\cD),\bX})$ through Equation~\eqref{eq:joint_law}.
Note that these assumptions restrict the assignment mechanism but place no restrictions on the law of the potential outcomes and covariates.

Although the observed data are generated under the realized decision
\(D\), our aim is to evaluate a given, possibly randomized
\emph{policy} \(\pi:\cX\to\Delta(\cD)\), which may differ from the
observed assignment mechanism $e$. The policy \(\pi\) induces a
decision \(D^\pi\) according to
\[
    D^\pi \mid \bX=\bx \sim \pi(\cdot;\bx).
\]
Similar to the assignment mechanism, we assume that any policy
randomization is independent of the unit's potential outcomes
conditional on the covariates.  That is, if $D^\pi = g(\bX, U)$, where
\(U\sim \mathrm{Unif}(0,1)\), then
$U \ind (Y(0),\ldots, Y(K-1)) \mid \bX$.

\subsection{Counterfactual Loss and Risk}\label{subsec:counterfactual_loss_risk}

We define counterfactual losses, which depend on the full vector of potential outcomes, and contrast them with standard losses, which depend only on the realized outcome under the chosen decision. A counterfactual loss induces a counterfactual risk that depends on the unknown joint distribution of potential outcomes, whereas a standard loss induces a standard risk that depends only on the observed marginal distribution of the realized outcome.

We define a loss (or negative utility) associated with a decision as a
function of the covariates and all potential outcomes, where a lower
loss value indicates a better decision. We refer to this as the 
\textit{counterfactual loss}, as it depends on the full vector of
potential outcomes. 
\begin{definition}[Counterfactual Loss]\label{def:loss}
  A counterfactual loss is a measurable function
  \( \ell : \cD \times \cY^\cD \times \cX \to \R \), where
  \( \ell(d; \by, \bx) \) denotes the loss of
  choosing decision \( D = d \) for a unit with potential
  outcomes \( Y(\cD) = \by \)
  and covariates \( \bX = \bx \).
\end{definition}
A subclass of counterfactual losses are \emph{standard losses}, which depend only on the realized outcome under the chosen decision, i.e., $\ell^{\std}(d;\by,\bx)=\ell^{\std}(d;y_d,\bx)$.

Since our goal is to study identification uniformly over all $P \in \mathcal{P}$, we impose a boundedness condition on the loss to accommodate the fact that $\cX$ need not be finite.
\begin{assumption}[Bounded Counterfactual Loss]\label{ass:bounded_loss}
    The counterfactual loss $\ell$ satisfies
    \[
        \|\ell\|_{\infty} = \sup_{d\in \cD, \by \in \cY^\cD, \bx \in \cX}|\ell(d; \by, \bx)| < \infty.
    \]
\end{assumption}

Next, we define the \textit{counterfactual risk}, or the expected
counterfactual loss, which serves as the primary estimand of
interest. We also define the \textit{conditional counterfactual risk},
which conditions on the observed covariates.

\begin{definition}[Counterfactual Risk and Conditional Counterfactual Risk] \label{def:risk}
    Given a joint law $P \in \mathcal{P}$ and a counterfactual loss $\ell$, the counterfactual risk $R_P(\pi; \ell)$ of policy $\pi: \cX \to \Delta(\cD)$ is given by,
    \begin{align*}
         R_P(\pi; \ell) &:= \E_{P^\pi}[\ell(D^\pi; Y(0), \ldots, Y(K-1), \bX)] = \E_{P_{\bX}}[R_{P_{Y(\cD) \mid \bX}}(\pi; \ell)],
    \end{align*}
    where,
    \begin{align*}
    R_{P_{Y(\cD) \mid \bX = \bx}}(\pi; \ell) := \sum_{d \in \cD} \sum_{\by \in \cY^\cD} \ell(d; \by, \bx) \cdot \pi(d; \bx) \cdot P(Y(0) = y_0, \ldots, Y(K-1) = y_{K-1} \mid \bX = \bx),
\end{align*}
is the conditional counterfactual risk defined almost surely with
respect to $P_{\bX}$.
\end{definition}
Definition~\ref{def:risk} evaluates a fixed policy by its expected counterfactual loss. The expectation is taken with respect to the joint law $(D^\pi, Y(0),\ldots,Y(K-1),\bX) \sim P^\pi$ where 
\[
    P^\pi(D^\pi=d,Y(\cD)=\by,\bX\in A)
    =
    \int_A \pi(d;\bx) \cdot
    P_{Y(\cD)\mid \bX}(Y(\cD)=\by\mid \bX=\bx)
    \,dP_{\bX}(\bX = \bx).
\]
The policy $\pi$ determines the conditional distribution of the
decision given the covariates, while $P_{Y(\cD),\bX}$ specifies the
(unknown) joint distribution of potential outcomes and covariates. We
therefore view $P_{Y(\cD),\bX}$ as the unknown \emph{state of nature}
from statistical decision theory \citep[e.g.,][]{wald1950statistical,
  berger1985statistical, manski2004statistical, STOYE200970}. Note that $P$ and
$P^\pi$ have the same marginal law of potential outcomes and
covariates, i.e., $P_{Y(\cD),\bX} = P^\pi_{Y(\cD),\bX}$.  Similarly,
we refer to the risk induced by a standard loss as a \emph{standard
  risk}.

Following the literature \citep[e.g.,][]{li2019unit,coston2020counterfactual,
mueller2023personalized,ben-michaelPolicyLearningAsymmetric}, we treat $\pi$ as fixed rather than learned from observational data. If instead the policy was estimated from a sample \(\mathcal S_n=\{(D_i, Y_i, \bX_i)\}_{i=1}^n\sim Q_P^n\), where
\(Q_P\) denotes the observable law induced by $P \in \cP$, we would additionally average over the realization of the sample
\begin{align}\label{eq:sampling_risk}
    \mathcal R_{P}(\hat \pi;\ell) =  \E_{Q_P^n}[R_P(\hat \pi(\mathcal S_n); \ell)],
\end{align}
where $\hat \pi(\mathcal{S}_n): \cX \to \Delta(\cD)$ is a
\emph{statistical decision rule} \citep[Section
1.3]{berger1985statistical} that maps a sample to a policy.  We leave
the study of this finite sample counterfactual risk to future work
\citep[see e.g.,][for this finite sample standard
risk]{berger1985statistical,manski2004statistical,
  stoye2011statistical,kitagawa2018should}.

\subsection{Additivity}\label{subsec:additivity}

The main obstacle to identifying counterfactual risk is that under a counterfactual loss the risk may depend on the joint distribution of all potential outcomes, even though only a single realized outcome is observed. To overcome this identification problem we consider a class of loss functions that depends additively on the potential outcomes.

\begin{definition}[Additive Counterfactual Loss] \label{def:additive}
  A counterfactual loss is \emph{additive} if it admits the
  following decomposition,
\begin{align*}
        \ell^\add(d;\by, \bx) := \varpi(\by, \bx) + \sum_{k \in \cD} \omega_k(d, y_k,\bx),
    \end{align*}
    where $\omega_k: \cD \times \cY \times \cX \rightarrow \R$ and $\varpi: \cY^\cD \times \cX \rightarrow \R$ are bounded, measurable functions. We call $\omega_k$ the weight function and $\varpi$ the
    intercept function.
\end{definition}

The additive counterfactual loss equals the sum of two terms; the
intercept function does not depend on the decision, while
the weight function does.  Moreover, each weight function $\omega_k$
depends only on the corresponding potential outcome given the
covariate, whereas the intercept function may include all possible
interactions of potential outcomes. Although we cannot identify the
intercept function, we can ignore it when comparing alternative
decision rules because it does not depend on the decision. We
generally refer to the counterfactual risk induced by an additive
counterfactual loss as \emph{additive counterfactual risk}.

We now illustrate Definition~\ref{def:additive} by revisiting the examples shown in Section~\ref{sec:examples}.
\begin{example}[Counterfactual Classification
  Loss] \label{ex:counterfactual_classification_loss} The
  counterfactual loss function given in
  Equation~\eqref{eq:classification_loss} is additive in the potential
  outcomes with $\omega_0(0, 0) = \ell_0$,
  $\omega_0(1, 1) = \tilde\ell_1+c_1$, $\omega_1(d,y_1)=0$ for
  $d,y_1\in\{0,1\}$, and $\varpi(\by)=0$. An extension of this
  counterfactual loss given in
  Equation~\eqref{eq:classification_loss_general} is also additive
  with $\omega_d(d, y_d) = \ell_{y_d} + c_d$,
  $\omega_{1-d}(d, y_{1-d}) = \tilde\ell_{y_{1-d}} $ for
  $d,y \in\{0,1\}$, and $\varpi(\by)=0$.
\end{example}

\begin{example}[Asymmetric Counterfactual
  Loss] \label{ex:hippocratic_oath} The asymmetric counterfactual loss
  given in Equation~\eqref{eq:asymmetric_loss} is an example of
  non-additive loss whenever $\ell_0^\text{R} - \ell_1^\text{R} \neq \ell_0^\text{H} - \ell_1^\text{H}$
  (see Equation~\eqref{eq:constant} in Section~\ref{subsec:principal_strata} for details). When
  $\ell_0^\text{R} - \ell_1^\text{R} = \ell_0^\text{H} - \ell_1^\text{H} = \kappa$, the loss is
  additive with $\omega_d(d, y_d) = \ell_{y_d} + c_d$,
  $\omega_{1-d}(d, y_{1-d}) = \{\kappa - (\ell_0 - \ell_1)\} y_{1-d}$, and
  $\varpi(\by) = (\ell_1^{\text{H}} - \ell_1) y_0 (1-y_1) +
  (\ell_1^{\text{R}} - \ell_1) (1-y_0)y_1 - \{\kappa - (\ell_0 -
  \ell_1)\} y_0 y_1$. Note that $\kappa - (\ell_0 - \ell_1)$ is the
  difference in loss between units for whom the treatment matters and
  those for whom it does not.
\end{example}

\begin{example}[Trichotomous decision]
  The counterfactual loss for individual harm, given in
  Equation~\eqref{eq:tri_additive_loss}, is additive in the potential
  outcomes with $\omega_d(d, y_d) = \ell_{y_d} + c_d$,
  $\omega_k(d, y_k) = r_{d,k} y_k \indicator{k < d}$ for
  $d, k \in\{0,1, 2\}, y_d, y_k \in \{0, 1\}$, and $\varpi(\by)=0$. By
  contrast, the extended counterfactual loss given in
  Equation~\eqref{eq:tri_counterfactual_loss} that also incorporates
  harm to the public is additive only if $p_0 = p_1 = 0$, in which case it reduces to
  Equation~\eqref{eq:tri_additive_loss}.
\end{example}

\subsection{Nonparametric Identification}\label{sec:nonparametric_identification}

We now study when counterfactual risk is identifiable under strong
ignorability. We show that additive losses are sufficient to identify
counterfactual risk differences and are also necessary for uniform
identification for every $P\in \cP$.

The fundamental problem of causal inference is that only one potential
outcome is observed for each unit \citep{holland1986}. Although the
full data are governed by $(D,Y(0),\ldots,Y(K-1),\bX) \sim P$ for some $P\in\cP$,
we observe only $(D,Y,\bX)$, where $Y=Y(D)$ by
Assumption~\ref{ass:consistency}. For $P\in\cP$, let $Q_P$ denote the
induced observed law
\[
    Q_P(D=k,Y=y,\bX\in A)
    :=
    P(D=k,Y(k)=y,\bX\in A).
\]

We adapt the following definition of identification from
\citet{basse2020generaltheoryidentification} to our setting. A more
general definition is given in
Lemma~\ref{lem:equivalence_of_identifiability}
(Appendix~\ref{app:equivalence_of_identifiability}).

\begin{definition}[Identifiability]\label{def:identifiability}
    Let $\theta: \cP \to \R$ be an estimand. We say that $\theta$ is \emph{identifiable} if every pair $P_1, P_2 \in \cP$ that induce the same observed law, $Q_{P_1} = Q_{P_2}$,  also have the same value, $\theta(P_1) = \theta(P_2)$.
\end{definition}

According to Definition~\ref{def:identifiability}, an estimand is identifiable if its value is invariant across all admissible joint distributions that induce the same observable law. In our setting, the counterfactual risk $R_P(\pi; \ell)$ is identifiable if and only if any two admissible joint laws $P_1, P_2 \in \cP$, with the same induced observed law $Q_{P_1} = Q_{P_2}$, have the same counterfactual risk $R_{P_1}(\pi;\ell)=R_{P_2}(\pi;\ell)$.


Equivalently, by Lemma~\ref{lem:equivalence_of_identifiability} (Appendix~\ref{app:equivalence_of_identifiability}), $R_P(\pi;\ell)$ is identifiable if and only if it can be expressed as a function of the observable distribution, i.e., $R_P(\pi; \ell) = h_{\pi, \ell}(Q_P)$. 
Furthermore, as formalized and shown in Lemma~\ref{lem:risk_identifiable_iff_conditional_risk_identifiable} (Appendix~\ref{app:risk_identifiable_iff_conditional_risk_identifiable}), the counterfactual risk $R_P(\pi;\ell)$ is identifiable with respect to $Q_P$ if and only if the conditional counterfactual risk $R_{P_{Y(\cD) \mid \bX = \bx}}(\pi;\ell)$ is identifiable with respect to $Q_P(D, Y \mid \bX)$.

While estimands that depend on the joint distribution of multiple
potential outcomes are generally not identifiable,
Assumption~\ref{ass:consistency}~and~\ref{ass:ignorability} imply that
the following conditional distribution is identified,
\[
    P(Y(k)=y\mid\bX)
    = P(Y = y \mid D = k, \bX) = 
    Q_P(Y=y\mid D=k,\bX),
\]
for every \(k\in\cD\) and \(y\in\cY\). This observation suggests that the additivity assumption given in Definition~\ref{def:additive} can facilitate identification.

We first show that for an additive loss, we can identify the {\it
  difference} in conditional counterfactual risk between any two
decision rules even though each conditional counterfactual risk is
only identifiable up to an unknown additive constant.  In other words,
the conditional counterfactual risk of a policy $\pi$ can be
decomposed into an identifiable part and an unidentifiable part, and
the latter does not depend on $\pi$.

\begin{theorem}[The Conditional Counterfactual Risk under the Additive Loss] \label{thm:additivity_marginal_risk}
    Fix a joint law $P \in \cP$ and a policy $\pi: \cX \to \Delta(\cD)$. Suppose that a counterfactual loss function $\ell^\add$ takes the additive form given in Definition~\ref{def:additive}.  Then, under Assumptions~\ref{ass:consistency}--\ref{ass:bounded_loss}, the conditional counterfactual risk can be decomposed as,
    \begin{align*}
        &R_{P_{Y(\cD) \mid \bX = \bx}}(\pi; \ell^\add) = \sum_{d\in \cD} \sum_{k\in \cD} \sum_{y\in \cY} \omega_k(d, y, \bx) \cdot \pi(d; \bx) \cdot P(Y = y\mid D = k, \bX = \bx) + C_{P_{Y(\cD) \mid \bX}}(\bx).
    \end{align*}
    where 
    $$C_{P_{Y(\cD) \mid \bX}}(\bx) =  \sum_{\by \in \cY^\cD} \varpi(\by, \bx) \cdot P(Y(\cD) = \by\mid \bX = \bx),$$ 
    Thus, the difference of conditional counterfactual risks between any pair of policies can be identified.
\end{theorem}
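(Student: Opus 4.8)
The plan is to substitute the additive form of $\ell^\add$ directly into the definition of the conditional counterfactual risk from Definition~\ref{def:risk} and then exploit the separable structure to collapse the weight terms down to marginal distributions of single potential outcomes. Writing $\ell^\add(d; \by, \bx) = \sum_{k \in \cD} \omega_k(d, y_k, \bx) + \varpi(\by, \bx)$ and plugging into $R_{\bx}(D^\ast; \ell^\add)$, linearity of the summation splits the expression into a \emph{weight part} (coming from the $\omega_k$ terms) and an \emph{intercept part} (coming from $\varpi$), which I would handle separately.

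For the intercept part, the term $\sum_{d \in \cD} \sum_{\by \in \cY^K} \varpi(\by, \bx) \Pr(D^\ast = d, \bY(\cD) = \by \mid \bX = \bx)$ marginalizes over $d$ to produce exactly $\sum_{\by \in \cY^K} \varpi(\by, \bx) \Pr(\bY(\cD) = \by \mid \bX = \bx) = C(\bx)$, since $\varpi$ carries no dependence on $d$. For the weight part, the key step---and the crux of the argument---is that each $\omega_k(d, y_k, \bx)$ depends on the potential-outcome vector $\by$ only through its $k$-th coordinate $y_k$. Interchanging the order of summation and marginalizing the joint probability over all coordinates $y_j$ with $j \neq k$ then yields $\sum_{d \in \cD} \sum_{k \in \cD} \sum_{y \in \cY} \omega_k(d, y, \bx) \Pr(D^\ast = d, Y(k) = y \mid \bX = \bx)$, which matches the claimed leading term. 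Together these establish the asserted decomposition.

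To complete the identifiability claim for the difference, I would invoke Assumption~\ref{ass:ignorability}: strong ignorability gives $\Pr(D^\ast = d, Y(k) = y \mid \bX) = \Pr(D^\ast = d, Y = y \mid D = k, \bX)$, exactly as already derived in the excerpt, so the entire weight part is a function of the observable distribution $P(D^\ast, D, Y \mid \bX)$. The intercept $C(\bx)$ depends on the unidentifiable joint law of all potential outcomes $\bY(\cD)$, but critically it does not depend on $D^\ast$; hence for any two decision rules the difference $R_{\bx}(D^\ast_1; \ell^\add) - R_{\bx}(D^\ast_2; \ell^\add)$ cancels $C(\bx)$ and is therefore identifiable. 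Integrating over $\bX$ via Lemma~\ref{lem:risk_identifiable_iff_conditional_risk_identifiable} transfers this conclusion to the unconditional counterfactual risk difference.

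The main obstacle is conceptual rather than computational: the marginalization itself is routine bookkeeping, but the substance of the result lies in recognizing that additivity is precisely what confines the decision-dependent part of the loss to terms depending on single coordinates $y_k$, so that the weight part involves only the \emph{marginal} single-potential-outcome distributions that strong ignorability identifies. The non-identifiable dependence on the joint distribution of all potential outcomes is thereby quarantined entirely within the decision-independent constant $C(\bx)$, which is what makes risk \emph{differences}---though not the individual risks---identifiable.
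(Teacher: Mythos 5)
Your proposal is correct and follows essentially the same route as the paper's own proof: substitute the additive form, split into the weight and intercept parts, marginalize the weight part over $\by_{-k}$ to reduce to single-potential-outcome marginals (identified via consistency and strong ignorability), and marginalize the intercept part over $d$ to obtain the decision-independent constant $C(\bx)$, which cancels in risk differences. No gaps.
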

The proof is given in
Appendix~\ref{app:additivity_implies_marginal_risk}.

Taking the expectation of the expression in Theorem~\ref{thm:additivity_marginal_risk} over the distribution of covariates, we find that the counterfactual risk is identified up to an unknown additive constant $\E_{P_{\bX}}[C_{P_{Y(\cD) \mid \bX}}(\bX)]$. We state this result as the following corollary without proof.
\begin{corollary}[The Counterfactual Risk under the Additive Loss] \label{cor:identification_risk_additive}
    Fix a joint law $P \in \cP$ and a policy $\pi: \cX \to \Delta(\cD)$. Suppose that a counterfactual loss function $\ell^\add$ takes the additive form given in Definition~\ref{def:additive}.  Then, under Assumptions~\ref{ass:consistency}--\ref{ass:bounded_loss}, the counterfactual risk can be decomposed as,
    \begin{align*}
        &R_P(\pi; \ell^\add) = \sum_{d\in \cD} \sum_{k\in \cD} \sum_{y\in \cY} \E_{P_{\bX}}\left[\omega_k(d, y, \bX) \cdot \pi(d; \bX) \cdot P(Y = y\mid D = k, \bX)\right] + \E_{P_{\bX}}[C_{P_{Y(\cD) \mid \bX}}(\bX)].
    \end{align*}
    Thus, the difference in counterfactual risk between any pair of
    decision rules is identified.
\end{corollary}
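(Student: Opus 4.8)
The plan is to obtain the Corollary as the marginalization of Theorem~\ref{thm:additivity_marginal_risk} over the covariate distribution, so that essentially all of the analytic content is inherited from the conditional result and only a law-of-total-expectation argument remains. First I would invoke Definition~\ref{def:risk}, which already records $R(D^\ast;\ell^\add)=\E[R_{\bX}(D^\ast;\ell^\add)]$ with the outer expectation taken over $P(\bX)$. Substituting the decomposition supplied by Theorem~\ref{thm:additivity_marginal_risk} for $R_{\bx}(D^\ast;\ell^\add)$ then makes the integrand a finite triple sum over $d\in\cD$, $k\in\cD$, $y\in\cY$ of $\omega_k(d,y,\bx)\Pr(D^\ast=d,Y(k)=y\mid\bX=\bx)$, plus the intercept term $C(\bx)$.

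The second step is to push the outer expectation inside the sum. Because $\cD$ and $\cY$ are finite, the sum has finitely many terms, so linearity of expectation applies termwise; the only regularity point I would flag is integrability of each $\omega_k(d,y,\bX)\Pr(D^\ast=d,Y(k)=y\mid\bX)$ and of $C(\bX)$ against $P(\bX)$, which holds whenever the weight and intercept functions are bounded, as in all the examples. This yields
\[
R(D^\ast;\ell^\add)=\sum_{d\in\cD}\sum_{k\in\cD}\sum_{y\in\cY}\E\bigl[\omega_k(d,y,\bX)\Pr(D^\ast=d,Y(k)=y\mid\bX)\bigr]+\E[C(\bX)].
\]

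The third step replaces the conditional joint law of $(D^\ast,Y(k))$ by its observable counterpart using the identification identity already derived in Section~\ref{sec:nonparametric_identification}: under Assumption~\ref{ass:ignorability}, unconfoundedness gives $D\ind(D^\ast,Y(k))\mid\bX$, consistency gives $Y=Y(k)$ on the event $\{D=k\}$, and overlap guarantees $\Pr(D=k\mid\bX)>0$, so that $\Pr(D^\ast=d,Y(k)=y\mid\bX)=\Pr(D^\ast=d,Y=y\mid D=k,\bX)$. Substituting this identity into the display above produces the claimed expression. For the final assertion that risk differences are identified, I would observe that $C(\bx)=\sum_{\by\in\cY^K}\varpi(\by,\bx)\Pr(\bY(\cD)=\by\mid\bX=\bx)$ depends on the joint law of the potential outcomes but not on $D^\ast$; hence for any two decision-making systems the common term $\E[C(\bX)]$ cancels in the difference $R(D^\ast_1;\ell^\add)-R(D^\ast_2;\ell^\add)$, leaving a functional of $\Pr(D^\ast=d,Y=y\mid D=k,\bX)$ and $P(\bX)$ alone, which is determined by the observable distribution $P(D^\ast,D,Y,\bX)$ and therefore identifiable in the sense of Definition~\ref{def:identifiability}.

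I do not expect a genuine obstacle here: the Corollary is a direct consequence of the Theorem, and the only points requiring care are bookkeeping ones. The most delicate is making precise that the unidentifiable piece $\E[C(\bX)]$ is shared across all decision rules, since this is exactly what upgrades ``identifiable up to an additive constant'' for a single risk into exact identifiability of risk differences. A secondary point is justifying the interchange of expectation and summation, which is immediate from the finiteness of $\cD$ and $\cY$ together with integrability of the summands; I would state this as a mild boundedness condition rather than belabor it.
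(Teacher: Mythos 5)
Your proof is correct and takes essentially the same route the paper intends: the paper states this corollary without proof, remarking only that it follows by taking the expectation of the decomposition in Theorem~\ref{thm:additivity_marginal_risk} over the covariate distribution, which is exactly your argument (marginalization via Definition~\ref{def:risk}, termwise linearity over the finite sums, the identity $\Pr(D^\ast=d,Y(k)=y\mid\bX)=\Pr(D^\ast=d,Y=y\mid D=k,\bX)$ from consistency and strong ignorability, and cancellation of the common term $\E[C(\bX)]$ in risk differences). Your added remarks on integrability and on writing $\E[C(\bX)]$ outside the triple sum are harmless refinements of the paper's statement, not a different approach.
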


Theorem~\ref{thm:additivity_marginal_risk} immediately implies that given $P \in \cP$, one can find an optimal policy by minimizing the identifiable term alone, 
$$\begin{aligned}
\pi^\text{opt} \ \in & \ \argmin_{\pi \in \Pi} R_P(\pi; \ell^\add) \\
= & \ \argmin_{\pi \in \Pi} \sum_{d\in \cD} \sum_{k\in \cD} \sum_{y\in \cY} \E_{P_{\bX}}\left[\omega_k(d, y, \bX) \cdot \pi(d; \bX) \cdot P(Y = y\mid D = k, \bX)\right],
\end{aligned}
$$
where $\Pi$ is a class of policies to be considered.  Moreover,
if the constant term is zero, then we can exactly identify the
counterfactual risk. This happens, for example, when the intercept
function in the additive counterfactual loss
(Definition~\ref{def:additive}) is zero, i.e., $\varpi(\by, \bx)=0$
for all $\by \in \cY^\cD$ and $\bx \in \cX$. Lastly, by placing
assumptions on $\varpi$, we can control the magnitude of
$\E_{P_{\bX}}[C_{P_{Y(\cD) \mid \bX}}(\bX)]$, which leads to partial identification results.

A natural question arises as to whether or not the above
identification results can be obtained under a different form of
counterfactual loss function.  The next theorem shows that the
additivity of the loss function given in Definition~\ref{def:additive}
is a necessary and sufficient condition for the identification of
difference in counterfactual risk.  In other words, every
counterfactual risk that is identifiable up to an additive constant
must have an additive loss.
\begin{theorem}[Additivity as Necessary and Sufficient Condition for Identification]\label{thm:add_necessary_sufficient}
  Fix a counterfactual loss $\ell$.  Under
  Assumptions~\ref{ass:consistency}--\ref{ass:bounded_loss}, the risk
  difference, $R_P(\pi; \ell) - R_P(\rho; \ell)$, is identifiable in
  the sense of Definition~\ref{def:identifiability} for every pair of
  policies $\pi, \rho: \cX \to \Delta(\cD)$ if and only if the loss
  $\ell$ admits the additive representation given in
  Definition~\ref{def:additive}.
\end{theorem}
The proof is given in Appendix~\ref{app:add_necessary_sufficient} and it shows that it suffices to prove Theorem~\ref{thm:add_necessary_sufficient} for all pairs of \emph{deterministic policies} $\pi_d := d \in \cD$.

Finally, as a corollary of this identification result, we show that
the exact identification of the counterfactual risk is possible {\it
  if and only if} there exists an equivalent counterfactual loss that
sets the intercept function $\varpi$ to zero
(Definition~\ref{def:additive}).
\begin{corollary}[Necessary and Sufficient Condition for Exact Identification]\label{cor:standard}
    The counterfactual risk given in Definition~\ref{def:risk} is identifiable under Assumptions~\ref{ass:consistency}--\ref{ass:bounded_loss} for every policy $\pi:\cX \to \Delta(\cD)$ if and only if the loss admits the following form,
    \begin{align*}
      \ell^\add(d;\by, \bx) = \sum_{k\in \cD} \omega_k(d, y_k,\bx),
    \end{align*}
    for a weight function $\omega_k: \cD \times
    \cY \times \cX \rightarrow \R$. 
    The identification formula is given by the corresponding risk,
    \begin{align*}
        R_P(\pi; \ell^\add) &= \sum_{d \in \cD}
        \sum_{k \in \cD} \sum_{y\in \cY} \E_{P_{\bX}}\left[\omega_k(d, y, \bX) \cdot \pi(d; \bX) \cdot P(Y = y \mid D = k, \bX)\right].
    \end{align*}
\end{corollary}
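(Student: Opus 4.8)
The plan is to reduce the ``exact'' identification question to a statement about the intercept function $\varpi$ alone, leaning on the two results just established. By Lemma~\ref{lem:risk_identifiable_iff_conditional_risk_identifiable} I may argue at the level of the conditional risk and average over $\bX$. Since exact identifiability of $R(D^\ast;\ell)$ in particular identifies all risk differences, Theorem~\ref{thm:standard_constant} forces any exactly identifiable loss to be additive, $\ell(d;\by,\bx)=\sum_{k\in\cD}\omega_k(d,y_k,\bx)+\varpi(\by,\bx)$. Corollary~\ref{cor:identification_risk_additive} then gives the decomposition $R(D^\ast;\ell)=I(Q)+\E[C(\bX)]$, where $I(Q)$ is the displayed identifiable term -- a functional of the observable law $Q=P(D^\ast,D,Y,\bX)$ -- and $C(\bx)=\sum_{\by\in\cY^K}\varpi(\by,\bx)\Pr(\bY(\cD)=\by\mid\bX=\bx)$. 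Because $I(Q)$ is already a function of the observables, $R(D^\ast;\ell)$ is exactly identified if and only if $\E[C(\bX)]$ takes a common value across the equivalence class $\Ss_0$ of joint laws mapping to $Q$, so the entire problem collapses to determining when the coupling-dependent term $\E[C(\bX)]$ is invariant.

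For the \emph{if} direction I would substitute $\varpi\equiv 0$: then $C(\bx)=0$ for every $\bx$, and Corollary~\ref{cor:identification_risk_additive} yields $R(D^\ast;\tilde\ell^\add)=I(Q)$ verbatim, which is precisely the claimed formula after using $\Pr(D^\ast=d,Y(k)=y\mid\bX)=\Pr(D^\ast=d,Y=y\mid D=k,\bX)$ from strong ignorability and consistency.

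For the \emph{only if} direction I would argue by contraposition and exploit the fact that, by Definition~\ref{def:identifiability}, identification must hold at \emph{every} observable $Q_0$; hence to break it I am free to pick a convenient one. Suppose $\varpi(\cdot,\bx)$ is not additively separable in $\by$ on a set of $\bx$ of positive probability, so that there are coordinates $k\neq l$ and values $a\neq a'$, $b\neq b'$ at which the mixed second difference of $\varpi$ in coordinates $k,l$ is nonzero. I then construct a $Q_0$ in which $D^\ast$ is independent of $\bY(\cD)$ given $\bX$, together with two laws $P_1,P_2\in\mathcal P$ that both map to $Q_0$, agree in every conditional marginal $\Pr(Y(k)=y\mid\bX)$ and in the single-outcome joints $\Pr(D^\ast=d,Y(k)=y\mid\bX)$, but whose couplings $\Pr(\bY(\cD)=\cdot\mid\bX)$ differ by exactly the four-point ``circuit'' shifting mass among the states $(a,b),(a',b'),(a,b'),(a',b)$ in coordinates $(k,l)$. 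This circuit preserves all one-dimensional marginals, so both laws satisfy Assumptions~\ref{ass:iid}--\ref{ass:ignorability} and share the same $I(Q_0)$, yet $\E_{P_1}[C(\bX)]-\E_{P_2}[C(\bX)]$ is a nonzero amount proportional to the mixed difference, giving $R_{P_1}\neq R_{P_2}$ and contradicting identification at $Q_0$. Therefore $\varpi$ must be additively separable almost everywhere; absorbing its separable pieces into the weights via $\tilde\omega_k:=\omega_k+\psi_k$ rewrites the loss as $\sum_k\tilde\omega_k(d,y_k,\bx)$ with vanishing intercept.

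The main obstacle is the combinatorial core of the last step: certifying that invariance of $\mu\mapsto\sum_{\by}\varpi(\by,\bx)\mu(\by)$ over couplings $\mu$ with fixed marginals is equivalent to additive separability of $\varpi(\cdot,\bx)$. The clean way to see this is that two couplings with identical marginals differ by a signed measure all of whose one-dimensional marginals vanish, and the span of such differences is precisely the orthogonal complement of the additively separable functions; the circuits above are a spanning set, and requiring $\E_\mu[\varpi]$ to be unchanged on each is exactly the vanishing of every mixed second difference, which in turn is equivalent -- by a telescoping argument from a fixed reference point -- to $\varpi(\by,\bx)=\sum_k\psi_k(y_k,\bx)$ up to a constant. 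Some care is needed to keep the perturbed couplings valid probability measures, but choosing interior marginals (admissible under the overlap condition of Assumption~\ref{ass:ignorability}) and a small enough circuit magnitude handles this.
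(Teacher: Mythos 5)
Your proof is correct, but it takes a genuinely different route from the paper's. The paper proves this corollary by re-running the proof of Theorem~\ref{thm:standard_constant} with the observable vector $\bq(\bp(\bx))$ restricted to the single-outcome joints $\{\Pr(D^\ast=d, Y(k)=y_k\mid \bX=\bx)\}$, invoking Lemma~\ref{lem:technical_lemma_1}(a) in place of (b) and then Lemma~\ref{lem:technical_lemma_3}; membership of the loss vector in $\text{im}(\bC^\top)$ for the restricted marginalization matrix directly yields the form $\sum_k \omega_k(d,y_k,\bx)$ with no intercept. You instead layer on top of the already-proved results: Theorem~\ref{thm:standard_constant} forces additivity with some intercept $\varpi$, Corollary~\ref{cor:identification_risk_additive} isolates the only non-observable term $\E[C(\bX)]$, and you then characterize exactly when that term is invariant over the fiber $\Ss_0$ via an explicit four-point circuit perturbation of the coupling (with $D^\ast$ made conditionally independent of $\bY(\cD)$ so that all couplings with the given one-dimensional marginals are realized in $\Ss_0$), together with the classical fact that vanishing mixed second differences characterize additive separability. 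The two arguments share the same underlying mechanism --- your circuits are precisely kernel elements of the marginal map that the paper's Lemma~\ref{lem:technical_lemma_3} perturbs along abstractly --- but your decomposition buys two things the paper's terse proof leaves implicit: a self-contained, concrete construction of the non-identifying perturbation, and the clarification that a loss with a nonzero but additively separable $\varpi$ also admits exact identification, the separable pieces being absorbed into the weights (which is why the corollary is about representability in the stated form rather than literal vanishing of $\varpi$). One small nit: your appeal to the overlap condition when choosing interior couplings is misplaced --- overlap constrains $\Pr(D=d\mid\bX)$, not the outcome coupling --- but this is immaterial since Assumptions~\ref{ass:iid}--\ref{ass:ignorability} place no restriction on the joint law of the potential outcomes, so interior couplings are freely available.
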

The proof is given in Appendix~\ref{app:standard}.

Theorem~\ref{thm:add_necessary_sufficient} shows that additive counterfactual
losses represent the largest class of counterfactual losses that are
uniformly identifiable for every $P \in \cP$ under
Assumptions~\ref{ass:consistency}--\ref{ass:bounded_loss}. In particular,
separability alone is not sufficient for identification; what is
required is \emph{additive} separability in the potential outcomes. The counterfactual
risks associated with this class never depend on the joint
distribution of potential outcomes in decision-relevant
terms. Moreover, this class of counterfactual losses is consistent
with the decision-analytic framework of \citet{dawid2000causal}, since
the resulting quantity of interest depends only on the marginal
distributions of potential outcomes. 

\section{Preferences of Counterfactual Loss}\label{sec:preferences_of_counterfactual_loss}

The preceding section studied identification of counterfactual risk,
but did not address whether decision making with counterfactual losses
satisfies basic rationality requirements such as transitivity. By
contrast, standard losses are known to induce preferences that are
consistent with the von Neumann--Morgenstern (vNM) axioms
\citep{NeumannMorgenstern1944} when preferences are defined over the
space of realized outcomes $Y=Y(D)\in\cY$. In a companion paper
\citep{koch2026axiomaticfoundationdecisionscounterfactual}, we show
that counterfactual losses are also coherent in the vNM sense once
preferences are defined on the generalized space of \emph{potential
  outcomes}, where the decision maker may also care about
counterfactual outcomes $\{Y(d)\}_{d\in\cD}$. We briefly summarize the
main results here.

Let $\cZ:=\cD\times\cY^{\cD}\times\cX$ denote the potential outcome space. As in Section~\ref{subsec:counterfactual_loss_risk}, each policy $\pi$ and joint law $P \in \cP$ induce a distribution $P^\pi \in \Delta(\cZ)$. In this section only, we allow for \emph{oracle} policies $\pi:\cY^{\cD}\times\cX\to\Delta(\cD)$, so that every element of $\Delta(\cZ)$ can be represented as $P^\pi$ for some $(P, \pi)$. In particular, absent restrictions such as Assumption~\ref{ass:ignorability}, the space $\cP$ can be identified with $\Delta(\cZ)$.
Given a counterfactual loss $\ell$, define a preference relation on $\Delta(\cZ)$ by,
\[
    P^\pi \succsim Q^\rho
    \iff
    R_P(\pi;\ell) \le R_Q(\rho;\ell).
\]
This relation compares state--policy pairs across populations, and
includes as a special case comparisons of policies within a fixed
state. In \citet{koch2026axiomaticfoundationdecisionscounterfactual},
we show that these preferences are fully characterized by a
generalization of the vNM axioms to $\Delta(\cZ)$ and, in particular,
satisfy transitivity.

On the same counterfactual outcome space, we also characterize how standard and additive losses restrict preferences. Preferences induced by standard losses correspond to an additional irrelevance axiom with which the decision maker is indifferent to counterfactual outcomes. Additive losses impose a weaker restriction. They allow the decision maker to value all potential outcomes, but require indifference to their joint dependence structure.

This aligns with the identification results above. The identification problem arises because the joint distribution of potential outcomes is unobserved. Additivity is precisely the restriction that eliminates dependence on the unidentified correlation structure while still allowing preferences to depend on counterfactual outcomes.



\section{Expressiveness of Additive Counterfactual Loss}\label{sec:expressiveness_add}

Having characterized the identifiable additive class, we ask how restrictive additivity is. If an additive counterfactual loss induces the same policy ranking as a standard loss, then they are not different in a meaningful way. We show that this equivalence holds when decisions are binary, but that additive losses are generally strictly more expressive with three or more decisions. In the latter case, under binary outcomes, we interpret additive counterfactual risk in terms of both accuracy and difficulty, whereas standard risk captures only accuracy. Finally, we give a sufficient monotonicity condition under binary outcomes under which every counterfactual loss admits an additive representation.

\subsection{When Does the Additive Counterfactual Loss Differ from the Standard Loss?}\label{subsec:add_std_equiv}
We show that when the decision is binary ($K=2$), every additive counterfactual loss has an equivalent standard loss that induces the same ranking over policies and hence the same optimal decision. However, if the decisions are non-binary, additive counterfactual losses are generally strictly more expressive than standard losses, provided at least one counterfactual weight has sufficient interactions between decision and counterfactual potential
outcomes.

\begin{proposition}[Additive Counterfactual Risk with Binary Decision]\label{pro:binary_add_std_equiv}
Suppose that the decision is binary, i.e., $\cD = \{0, 1\}$. Suppose that the counterfactual loss $\ell^\add$ takes the additive form given in Definition~\ref{def:additive} and that Assumption~\ref{ass:bounded_loss} holds. Define
\begin{align*}
    \ell^\std(d; y_d, \bx) & = \omega_d(d, y_d, \bx) - \omega_d(1-d,
                             y_d, \bx), \\
    h(\by, \bx) & =  \omega_0(1, y_0, \bx) + \omega_1(0, y_1, \bx) + \varpi(\by, \bx).
\end{align*}
Then,
\begin{align*}
    \ell^\add(d; \by, \bx) = \ell^\std(d; y_d, \bx) + h(\by, \bx),
\end{align*}
for all $d \in \{0, 1\}, \by \in \cY^2$ and $\bx \in
\cX$. Consequently, for any additive counterfactual loss $\ell^\add$
there exists a standard loss $\ell^\std$ such that for every joint law
\(P\in\cP\) and every policy \(\pi:\cX\to\Delta(\cD)\), the risk
difference,
$R_P(\pi;\ell^\add) - R_P(\pi;\ell^\std) = \E_{P_{Y(\cD),\bX}}[h(Y(0),
Y(1), \bX)]$, does not depend on \(\pi\). Moreover, the standard loss
$\ell^\std$ is unique for this property up to addition of a bounded
measurable function of covariates only, i.e.,
$\ell^{\std}_c(d;y_d,\bx)=\ell^{\std}(d;y_d,\bx) + c(\bx)$. Finally,
the mapping from additive counterfactual losses to their equivalent
standard losses is surjective but not injective.
\end{proposition}

The proof is given in Appendix~\ref{app:binary_add_std_equiv}. This
proposition implies that when the decision is binary, every additive
counterfactual loss has an equivalent standard loss that induces the
same ordering over policies. As a consequence, the corresponding risks
lead to the same optimal decision rule.

Intuitively, when the decision is binary there is only one comparison:
choosing $d$ versus choosing $1-d$, that is, considering
$\ell(d; \by, \bx) - \ell(1-d; \by, \bx)$.  For an identifiable loss,
this contrast must be additive in the potential outcomes. Since $1-d$
is a deterministic function of $d$, it must be of the form
$f_1(y_1, \bx) - f_0(y_0, \bx)$. But, with only two potential
outcomes, any such additive contrast can be absorbed into a standard
loss contrast, for example by defining
$\ell^{\std}(d; y_d, \bx) = f_d(y_d, \bx)$.  On the other hand, when
$K = 3$, the relevant comparisons involve a third, unchosen
option. Because a standard loss difference depends on at most two
potential outcomes, it cannot generally capture this additional
dependence.

Perhaps surprisingly, this
result is not limited to binary outcomes. The corresponding standard
loss $\ell^\std(d; y_d, \bx)$ is given by the difference between the weight
for the realized outcome $Y(D^\pi)=y_d$ under the chosen decision
$D^\pi = d$ and the counterfactual weight for the same outcome $Y(D^\pi)=y_d$ under the
alternative decision $D^\pi = 1 - d$ (up to an additive constant that
only depends on covariates).

Although every additive counterfactual loss has a unique corresponding
standard loss (up to an additive constant) that yields the same policy ranking, the reverse does not hold.  That is, any given
standard loss has infinitely many additive counterfactual losses with
the same policy ranking. Each of such counterfactual losses
assigns different values to principal strata. This asymmetry explains
why, even in the binary case, additive counterfactual loss has a
straightforward interpretation based on principal strata.

We now revisit our binary decision examples given in Section~\ref{subsec:additivity}. 
\setcounter{example}{0}
\begin{example}[Counterfactual Classification Loss]
The counterfactual loss in Equation~\eqref{eq:classification_loss_general} can be expressed as a standard loss,  $\ell^\std(d; y_d) = \ell_{y_d} - \tilde{\ell}_{y_d} + c_d$,  
which corresponds to the difference between the realized and
counterfactual losses associated with outcome $y_d$ with the additional
cost term.  Thus, the regret is absorbed into the standard loss. If $\tilde{\ell}_0 < \tilde{\ell}_1$ (see Section~\ref{sec:add_counterfactual_loss}), the loss difference between correct and incorrect decisions is amplified, since $(\ell_0 - \tilde{\ell}_0) - (\ell_1 - \tilde{\ell}_1) > \ell_0 - \ell_1$.
\end{example}

\begin{example}[Asymmetric Counterfactual Loss]
When $\ell_0^\text{R} - \ell_1^\text{R} = \ell_0^\text{H} - \ell_1^\text{H} = \kappa$ holds, the loss in Equation~\eqref{eq:asymmetric_loss} can be rewritten as a standard loss with 
$\ell^\std(d; y_d) = \ell_{y_d} + c_d - \{\kappa - (\ell_0 -
\ell_1)\}y_{d} \propto \ell^{\text{H}}_{y_d} + c_d \propto
\ell^{\text{R}}_{y_d} + c_d$. Thus, the loss reduces to the standard loss in
Equation~\eqref{eq:ex_standard_loss}.
\end{example}

Despite Proposition~\ref{pro:binary_add_std_equiv}, additive and
standard losses can still induce different preferences on the
potential outcome space $\Delta(\cZ)$ even in the binary decision case \citep[Section~4.2]{koch2026axiomaticfoundationdecisionscounterfactual}. The
reason is that, although the risk difference
$R_P(\pi;\ell^{\add}) - R_P(\pi;\ell^{\std}) = H_P$ is policy
independent, it can vary with $P\in\cP$. The next remark discusses how
this affects common decision criteria.

\begin{remark}[Connection to Decision Criteria]
  Empirical risk minimization (ERM) is a common learning strategy
  where one chooses a policy that minimizes the estimated risk
  $\hat R(\cdot; \ell)$, leading to
  $\hat{\pi}_{\mathrm{ERM}} \in \argmin_{\pi \in \Pi} \hat R(\pi;
  \ell)$. By Theorem~\ref{thm:add_necessary_sufficient}, this approach
  requires additivity.  Alternatively, one may appeal to standard
  decision principles such as the Bayes, minimax, and minimax regret
  criteria \citep{berger1985statistical, manski2004statistical,
    STOYE200970}. Under the Bayes principle, given an assignment mechanism $e$, one specifies a prior
  $\xi$ over states of nature $P_{Y(\cD), \bX} \in \Delta(\cY^\cD \times \cX)$ and chooses
  $\pi_{\mathrm{Bayes}} \in \argmin_{\pi \in \Pi} \int
  \mathcal{R}_{e \cdot P_{Y(\cD), \bX}}(\pi; \ell)\, \mathrm{d} \xi(P_{Y(\cD), \bX})$, where $e \cdot P_{Y(\cD), \bX}\in \cP$ and $\mathcal{R}_{e \cdot P_{Y(\cD), \bX}}$ defined as in Equations~\eqref{eq:joint_law} and~\eqref{eq:sampling_risk}. Similarly, the minimax criterion
  selects the policy that minimizes worst-case risk,
  $\pi_{\mathrm{MM}} \in \argmin_{\pi \in \Pi} \sup_{P_{Y(\cD), \bX} \in \Delta(\cY^\cD \times \cX)} \mathcal{R}_{e \cdot P_{Y(\cD), \bX}}(\pi; \ell)$.  Finally, the minimax
  regret criterion chooses the policy that minimizes worst-case regret
  relative to the oracle policy,
  $$\pi_{\mathrm{MM-R}} \in \argmin_{\pi \in \Pi} \sup_{P_{Y(\cD), \bX} \in \Delta(\cY^\cD \times \cX)} \left\{ \mathcal{R}_{e \cdot P_{Y(\cD), \bX}}(\pi; \ell) - \min_{\pi' \in
      \Pi} R_{e \cdot P_{Y(\cD), \bX}}(\pi'; \ell) \right\}.$$
      
    In the binary case $\cD=\{0,1\}$, the decomposition $\ell^{\add}=\ell^{\std}+h$ in Proposition~\ref{pro:binary_add_std_equiv} with a decision independent function $h$ implies that $\ell^{\add}$ and $\ell^{\std}$ yield the same optimal policy under ERM, Bayes, and minimax regret. This invariance need not hold under minimax as the decision-independent term $h$ can shift risks differently across states $P_{Y(\cD), \bX}$.

\end{remark}

Next, we show that when the decision is non-binary, no standard loss
corresponds directly to the additive counterfactual loss, as long as
the counterfactual weight is not additively separable in decision and
counterfactual outcome.
\begin{proposition}[Additive Counterfactual Risk with Non-binary Decision]\label{prop:counterfactual_needed}
  Assume that the decision is non-binary, i.e., $K = |\cD| \ge
  3$. Suppose that the counterfactual loss $\ell^\add(d; \by, \bx)$
  takes the additive form given in Definition~\ref{def:additive} and
  that Assumption~\ref{ass:bounded_loss} holds.

  Then, there exists no standard loss $\ell^\std(d; y_d, \bx)$ such
  that for every $P \in \cP$ the risk difference
  $R_P(\pi;\ell^\add)-R_P(\pi;\ell^\std)$, is independent of
  $\pi: \cX \to \Delta(\cD)$ if and only if at least one weight
  function $\omega_k(d, y, \bx)$ is not off-diagonal additively
  separable in $d \neq k$ and $y \in \cY$.  That is, there do not
  exist bounded measurable functions
  $f_k:\cD \setminus \{k\} \times \cX \to \R$ and
  $g_k: \cY \times \cX \to \R$ such that
  $\omega_k(d, y, \bx) = f_k(d, \bx) + g_k(y, \bx)$, for all
  $d \in \cD \setminus \{k\}, y \in \cY$ and $\bx \in \cX$.

  If all the weight functions are off-diagonal additively separable,
  then an equivalent standard loss exists and is given by,
\[
\ell^\std(d; y_d, \bx) = \omega_d(d, y_d, \bx) - g_d(y_d, \bx) + \sum_{k \in \cD, k \neq d} f_k(d, \bx).
\]
This standard loss is unique up to the addition of a bounded measurable function of the covariates.
\end{proposition}
The proof is given in Appendix~\ref{app:counterfactual_needed}.
Proposition~\ref{prop:counterfactual_needed} shows that in the
non-binary setting, standard losses are inadequate when the
interactions between decision and counterfactual outcomes matter. In
the binary decision case, the weight functions are always additively
separable. Indeed, since $\cD \setminus \{k\} = \{1-k\}$ is a
singleton for $\cD = \{0, 1\}$, we can set $f_k(1-k, \bx) = 0$ and
$g_k(y, \bx) = \omega_k(1-k, y, \bx)$ to recover
Proposition~\ref{pro:binary_add_std_equiv}.

\begin{example}[Trichotomous decision]
  The additive counterfactual loss given in
  Equation~\eqref{eq:tri_additive_loss} is not off-diagonal additively
  separable. Indeed, the weight function
  $\omega_k(d,y) = \indicator{d = k} (\ell_{y} + c_d) + \indicator{d
    \neq k} r_{d,k} y \indicator{k < d}$ is not separable in the
  decision and outcome for $d\neq k$ if and only if $r_{2,1} \neq 0$
  or $r_{1,0} \neq r_{2,0}$. Thus, no standard loss can take harm to
  the arrestee into account.
\end{example}

However,
Propositions~\ref{pro:binary_add_std_equiv}~and~\ref{prop:counterfactual_needed}
also show that there do exist counterfactual losses depending on all
potential outcomes that do not generalize the decision-making compared
to losses only depending on the observed outcome. We will illustrate
this somehow paradoxical behavior with an example.

\begin{example}[Additively Separable Regret]
  Consider the following additive counterfactual loss,
  $$\ell(d; \by) = \ell_{d, y_d} + \sum_{k \in \cD, k \neq d} \lt(c_{k,
    d} + \tilde \ell_{k, y_k}\rt),$$ for $d \in \cD$ and $\by \in \cY^\cD$,
  which generalizes Equation~\eqref{eq:classification_loss_general} to
  multi-valued decision and outcome settings. The corresponding weight
  function is given by
  $\omega_k(d, y) = \indicator{d = k}\ell_{d, y}+ \indicator{d \neq
    k}\lt(c_{k, d} + \tilde \ell_{k, y}\rt)$, which is off-diagonal
  additively separable with $f_k(d) = c_{k,d}$ and
  $g_k(y) = \tilde{\ell}_{k,y}$.  By
  Proposition~\ref{prop:counterfactual_needed}, an equivalent standard
  loss is equal to, 
  $$\ell^\std(d, y_d) = \ell_{d, y_d} - \tilde \ell_{d, y_d} + \sum_{k
    \in \cD, k \neq d} c_{k, d}.$$ Computing the loss difference yields
  $\ell(d, \by) - \ell^\std(d, y_d) = \sum_{k \in \cD} \tilde \ell_{k,
    y_k}$, which does not depend on the decision.  Thus, even losses
  that assign a different regret value to each unrealized
  counterfactual outcome can be represented as a standard loss, so
  long as the regret only depends additively on the decision and
  counterfactual outcome. By
  Proposition~\ref{prop:counterfactual_needed}, all counterfactual
  losses that can be written equivalently as standard losses have this
  form.
\end{example}

\subsection{Case with Binary Outcome and Multi-valued Decision}\label{subsec:binary}

Next, we show that when the outcome is binary, additive counterfactual
risk admits a decomposition into \emph{accuracy} and \emph{difficulty}
terms. Accuracy captures how well the chosen decision performs on the
realized outcome. Difficulty measures how many counterfactual
alternatives would have produced a successful outcome. Standard losses
reflect accuracy, but cannot capture difficulty.

Consider the case of binary outcome and multi-valued decision, i.e.,
$\cY=\{0,1\}$ and $K \ge 3$. For notational simplicity, we ignore
covariates.  Without loss of generality, assume that $Y=1$ is a
desirable outcome while $Y=0$ is undesirable. Under this setting, it
is natural to consider the following ordering of weight functions in
the additive counterfactual loss in Definition~\ref{def:additive} for
any given decision $d$,
\begin{equation}\label{equ:binary_loss_structure_general}
    \omega_d(d, 1)  \leq 0 \leq  \omega_d(d, 0), \quad
    \{\omega_{d^\prime}(d, 0) \}_{d^\prime \neq d} \leq 0 \leq \{\omega_{d^\prime}(d, 1) \}_{d^\prime \neq d},
\end{equation}
where $\omega_d(d, y_d)$ for $y_d = 0,1$ accounts for the {\it
  accuracy} of decision, and
$\{\omega_{d^\prime}(d, y_{d^\prime}) \}_{d^\prime \neq d}$ for
$y_{d^\prime}=0,1$ capture its {\it difficulty}.
  
We first discuss the accuracy component. The correct ``true positive''
decision, i.e., $(D^\pi, Y(d)) = (d, 1)$ decreases the
counterfactual loss, which is represented by a negative sign of
$\omega_d(d, 1)$.  In contrast, the incorrect ``false negative''
decision, i.e., $(D^\pi, Y(d)) = (d, 0)$ increases the loss,
which corresponds to a positive sign of $\omega_d(d, 0)$.

While accuracy is an essential part of the standard decision theory,
difficulty is unique to the counterfactual loss.  Avoidance of an
incorrect decision (``true negative''), i.e.,
$(D^\pi, Y(d^\prime)) = (d, 0)$, results in a decrease of the loss,
while failure to make a correct ``true positive'' decision, i.e.,
$(D^\pi, Y(d^\prime)) = (d, 1)$, increases the loss. Thus, the
counterfactual loss is smaller when fewer alternative decisions would
yield a desirable outcome (making a correct decision is
difficult). The loss is greater when many alternatives would result in
a desirable outcome (making a correct decision is easier).  In this
way, the counterfactual loss rewards a consequential decision that
changes the outcome.

Lastly, since each weight function \( \omega_k(d, y) \) depends on
both the decision and the outcome, we can decompose it into a baseline
decision term and an interaction component:
$$
\omega_k(d, y) = \omega_k(d, 0) + \indicator{y = 1} \left( \omega_k(d, 1) - \omega_k(d, 0) \right).
$$
The first term corresponds to the baseline cost of making decision $D^\pi = d$ on the potential outcome $Y(k)$, while the second term accounts for the interaction between decision and potential outcome.

We formalize the above discussion as the following corollary of
Theorem~\ref{thm:additivity_marginal_risk}.  Specifically, we show
that the conditional counterfactual risk can be decomposed into four
components; accuracy based on the realized outcome
$P^\pi(D^\pi = d, Y(d) = 1 \mid \bX = \bx)$, difficulty based on
counterfactual outcomes $P^\pi(D^\pi = d, Y(k)=1 \mid \bX = \bx)$ for
$k\ne d$, a baseline decision probability
$P^\pi(D^\pi = d \mid \bX = \bx)$, and the intercept function
$C_{P_{Y(\cD) \mid \bX}}(\bx)$.  Each of these terms has a weight
function based on the specified additive loss function.
\begin{corollary}[Accuracy and Difficulty for the Binary Outcome Case]\label{cor:bin_standard}
    Assume $\cY = \{0, 1\}$. Suppose that the counterfactual loss $\ell^\add$ takes the additive form given in Definition~\ref{def:additive} and that Assumption~\ref{ass:bounded_loss} holds. Then for every joint law $P \in \cP$ and policy $\pi: \cX \to \Delta(\cD)$, the conditional counterfactual risk can be written as,
    \begin{align*}
         & R_{P_{Y(\cD) \mid \bX = \bx}}(\pi; \ell^\add) \\
         = & \sum_{d \in \cD} \zeta_{d}(d, \bx) P^\pi(D^\pi = d, Y(d) = 1 \mid \bX = \bx)
         + \sum_{d \in \cD} \sum_{k \in \cD, k \neq d} \zeta_{k}(d, \bx)  P^\pi(D^\pi = d, Y(k) = 1 \mid \bX = \bx) \\
         & + \sum_{d \in \cD} \xi(d, \bx)  P^\pi(D^\pi = d \mid \bX = \bx) + C_{P_{Y(\cD) \mid \bX}}(\bx).
    \end{align*}
    where
    \begin{align*}
      \zeta_{k}(d, \bx) & = \omega_k(d, 1, \bx) - \omega_k(d, 0, \bx),
                          \qquad \xi(d, \bx) = \sum_{k \in \cD}\omega_k(d, 0, \bx),\\
    C_{P_{Y(\cD) \mid \bX}}(\bx) & =  \sum_{\by \in \{0, 1\}^{K}} \varpi(\by, \bx)
    P(\bY(\cD) = \by\mid \bX = \bx).
    \end{align*} 
\end{corollary}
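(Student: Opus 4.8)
The plan is to specialize the general decomposition in Theorem~\ref{thm:additivity_marginal_risk} to the binary outcome case and then reorganize the resulting expression so that the single-potential-outcome joint probabilities $\Pr(D^\ast = d, Y(k) = y \mid \bX = \bx)$ enter only through the event $\{Y(k) = 1\}$ together with the marginal decision probability $\Pr(D^\ast = d \mid \bX = \bx)$. Theorem~\ref{thm:additivity_marginal_risk} already supplies
$$R_{\bx}(D^\ast; \ell^\add) = \sum_{d \in \cD} \sum_{k \in \cD} \sum_{y \in \cY} \omega_k(d, y, \bx)\, \Pr(D^\ast = d, Y(k) = y \mid \bX = \bx) + C(\bx),$$
with $C(\bx)$ exactly as in the statement, so no further identification argument is needed and the entire task is algebraic bookkeeping for $\cY = \{0,1\}$.

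The key step is to substitute the baseline-plus-interaction decomposition $\omega_k(d, y, \bx) = \omega_k(d, 0, \bx) + \indicator{y = 1}\,(\omega_k(d,1,\bx) - \omega_k(d,0,\bx))$, already recorded in the text preceding the corollary, into the inner sum over $y \in \{0,1\}$. For each fixed $(d,k)$ this collapses the two-term sum into $\omega_k(d, 0, \bx)\,\Pr(D^\ast = d \mid \bX = \bx) + \zeta_k(d, \bx)\,\Pr(D^\ast = d, Y(k) = 1 \mid \bX = \bx)$, where I use $\sum_{y \in \{0,1\}} \Pr(D^\ast = d, Y(k) = y \mid \bX = \bx) = \Pr(D^\ast = d \mid \bX = \bx)$ by the law of total probability (marginalizing over the binary $Y(k)$) and set $\zeta_k(d, \bx) = \omega_k(d, 1, \bx) - \omega_k(d, 0, \bx)$.

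The final step is to sum over $k$ and then $d$, regrouping the three kinds of terms. Summing the baseline contributions over $k$ yields $\xi(d, \bx) = \sum_{k \in \cD} \omega_k(d, 0, \bx)$ multiplying the common factor $\Pr(D^\ast = d \mid \bX = \bx)$; summing the interaction contributions and separating the diagonal index $k = d$ from the off-diagonal indices $k \neq d$ produces the accuracy term $\zeta_d(d, \bx)\,\Pr(D^\ast = d, Y(d) = 1 \mid \bX = \bx)$ and the difficulty terms $\sum_{k \neq d} \zeta_k(d, \bx)\,\Pr(D^\ast = d, Y(k) = 1 \mid \bX = \bx)$, respectively. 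Carrying $C(\bx)$ through unchanged then reproduces the stated four-part decomposition. There is no genuine obstacle here: the argument is purely a matter of exchanging finite sums and tracking the diagonal/off-diagonal split, so the only places to be vigilant are in correctly marginalizing $Y(k)$ to recover $\Pr(D^\ast = d \mid \bX = \bx)$ and in not double-counting the $k = d$ contribution.
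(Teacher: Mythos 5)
Your proposal is correct and follows essentially the same route as the paper: both start from the decomposition in Theorem~\ref{thm:additivity_marginal_risk}, eliminate the $Y(k)=0$ terms (you by splitting $\omega_k(d,y,\bx)$ into baseline plus interaction and marginalizing over $y$; the paper, equivalently, by writing $\Pr(D^\ast=d, Y(k)=0\mid\bX=\bx)$ as $\Pr(D^\ast=d\mid\bX=\bx)-\Pr(D^\ast=d, Y(k)=1\mid\bX=\bx)$), and then regroup into the accuracy, difficulty, baseline, and constant terms. The two manipulations are algebraically identical, so there is nothing substantive to distinguish them.
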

The proof is given in Appendix~\ref{app:bin_standard}.  If we choose
the weights such that
Equation~\eqref{equ:binary_loss_structure_general} holds, then we have
$\zeta_{d}(d, \bx) \leq 0 \leq \zeta_{k}(d, \bx)$ for all $k \neq
d$. This implies that the risk decreases with greater accuracy (when
the realized outcome is desirable), and increases with counterfactual
regret (when other decisions also would yield a desirable outcome),
reflecting the difficulty of the decision. The baseline decision term
is a regularizer that penalizes decisions with higher baseline losses.

We next characterize when the additive counterfactual loss differs
from the standard loss in the binary outcome case. This result is a
corollary of Proposition~\ref{prop:counterfactual_needed}.  It shows
that an additive counterfactual loss cannot be represented by a
standard loss if and only if the former includes a decision-dependent
difficulty term.
\begin{corollary}[Decision-Dependent Difficulty for the Binary Outcome Case]\label{cor:bin_equivalence}
Assume $\cY = \{0, 1\}$ and $K = |\cD| \geq 3$. Suppose that the counterfactual loss $\ell^\add(d; \by, \bx)$ takes the additive form given in Definition~\ref{def:additive} and that Assumption~\ref{ass:bounded_loss} holds. Then, there exists a standard loss $\ell^\std(d; y_d, \bx)$ such that for every $P \in \cP$ the risk difference
        $R_P(\pi;\ell^\add)-R_P(\pi;\ell^\std)$,
  is independent of $\pi: \cX \to \Delta(\cD)$ if and only if for each $k \in \cD$ the slope
        $\zeta_{k}(d, \bx) = \omega_k(d, 1, \bx) - \omega_k(d, 0, \bx)$
    is independent of $d \in \cD \setminus \{k\}$. Under this condition an equivalent standard loss is given by
    \[
        \ell^{\std}(d; y_d, \bx) = y_d [\zeta_{d}(d, \bx) - \tilde{\zeta}_d(\bx)] + \xi(d, \bx),
    \]
    where $\tilde{\zeta}_d(\bx):=\omega_d(j,1,\bx)-\omega_d(j,0,\bx)$ for any $j\neq d$ and
    $\xi(d, \bx) = \sum_{k \in \cD}\omega_k(d, 0, \bx)$. This standard loss is unique up to the addition of a bounded measurable function of the covariates.

\end{corollary}
Thus, a necessary and sufficient condition for an additive
counterfactual loss to differ from a standard loss is to have the
regret weight function $\zeta_{k}(d, \bx)$ in
Corollary~\ref{cor:bin_standard} depend on the decision $d$.  

\subsection{When Is Every Counterfactual Loss
  Additive?}\label{subsec:all_cf_loss_is_add}

In Section~\ref{subsec:add_std_equiv}, we discussed the conditions
under which an additive counterfactual loss is equivalent to a
standard loss. Here, we briefly consider the opposite question. When
can a fully counterfactual loss be written in an additive form? We
give a monotonicity condition under which any counterfactual loss
admits an additive representation in the important case of binary
outcomes. This extends the monotonicity result of \citet{li2019unit}
to non-binary decisions. A necessary and sufficient condition is given
later as Corollary~\ref{cor:identification_verification}.  The
following result formalizes the above discussion.
\begin{proposition}[Counterfactual Risk with Binary Outcomes under Monotonicity]\label{pro:all_cf_loss_is_add}
  Suppose that $Y\in\{0,1\}$ and the potential outcomes are monotone
  in the decision, i.e., $Y(0) \le Y(1) \le \cdots \le Y(K-1)$.  Then,
  every counterfactual loss $\ell$ in Definition~\ref{def:loss} admits
  an additive representation on the monotone support. Specifically,
  there exists an additive loss $\ell^{\add}$ according to
  Definition~\ref{def:additive} such that
  $\ell(d;\by,\bx)=\ell^{\add}(d;\by,\bx)$, for all $d \in \cD$,
  $\by \in \cY^\cD_{\mathrm{mon}}$ and $\bx \in \cX$ where
  $\cY^\cD_{\mathrm{mon}}=\{s^0,\ldots,s^K\}\subseteq\{0,1\}^{\cD}$ is
  the maximal possible support of $(Y(0),\ldots,Y(K-1))$ under
  monotonicity, with $s_k^m = \indicator{k \geq m}$ for
  $k = 0, \ldots, K-1$ and all $m = 0, \ldots, K$.  A choice of
  weights for $\ell^\add$ is given by
\[
    \omega_k(d; y, \bx) = \frac{1}{K} \ell(d; s^K, \bx) + y[\ell(d; s^k, \bx) - \ell(d; s^{k+1}, \bx)].
\]
\end{proposition}
The proof is given in Appendix~\ref{app:all_cf_loss_is_add}. The
weights $\omega_k(d;y,\bx)$ in
Proposition~\ref{pro:all_cf_loss_is_add} assign to $Y(k)$ the
incremental change in loss from setting $Y(k)=0$ to $Y(k)=1$. We
revisit the examples in Section~\ref{sec:examples} to illustrate this
result under monotonicity.

\setcounter{example}{1}
\begin{example}[Asymmetric Counterfactual Loss]
  Consider monotonicity, $Y(0)\le Y(1)$, implying that no unit is
  harmed. Then, the support is given by
  $\cY^\cD_{\mathrm{mon}}=\{(1,1),(0,1),(0,0)\}$, and the
  counterfactual loss in Equation~\eqref{eq:asymmetric_loss} is
  additive,
  $\ell^{\cf}(d;y_0,y_1)=\omega_0(d;y_0)+\omega_1(d;y_1)$. The weights
  in Proposition~\ref{pro:all_cf_loss_is_add} are given by
  $\omega_k(d;y) = \frac{1}{2}(\ell_0+c_d) +
  y\{\indicator{k=0}(\ell_1-\ell_d^{\mathrm R}) +
  \indicator{k=1}(\ell_d^{\mathrm R}-\ell_0)\}$, which is the loss
  difference between never survivor to responder ($k=1$) and between
  responder to always survivor ($k=0$).
\end{example}

\begin{example}[Trichotomous Decision]
  Recall from Section~\ref{subsec:trichotomous_decision} that
  $Y(2)= 1$. Under monotonicity, we have $Y(0)\le Y(1)$, implying
  that no unit commits a crime only under supervision. In this case,
  $\cY^\cD_{\mathrm{mon}}= \{(1, 1, 1), (0,1, 1), (0, 0, 1) \}$ and
  the counterfactual loss in
  Equation~\eqref{eq:tri_counterfactual_loss} is additive with weights
  in Proposition~\ref{pro:all_cf_loss_is_add} given by
  $\omega_k(d;y) = \frac{1}{3}(\ell_0+c_d)+y\,\alpha_{k,d}$, where
  $\alpha_{k,d} = \indicator{k=d}(\ell_1-\ell_0) +
  \indicator{d=0}\{\indicator{k=1}-\indicator{k=0}\}p_0 +
  \indicator{k<d}r_{d,k}.$
\end{example}

\section{Symbolic Verification of Additivity}\label{sec:symbolic}

We have established additivity as a necessary and sufficient condition
for identifiability. In this section, we show how to verify this
condition in practice. We formulate identification as a linear system
whose solution determines whether a proposed counterfactual loss is
additive.  We then apply the analysis to the binary decision and
outcome case where we derive the resulting identification constraints
in terms of principal strata.

\subsection{A Symbolic Algorithm for Identification Verification}

Theorem~\ref{thm:add_necessary_sufficient} establishes additivity as a
necessary and sufficient condition for identifying counterfactual risk
differences. Building on this result, we propose a symbolic algorithm
that takes a counterfactual loss function as an input and tests
whether the corresponding counterfactual risk is identifiable.  The
algorithm also yields the weight and intercept functions from
Definition~\ref{def:additive}, leading to a decomposition of
counterfactual risk into observable marginals. The proposed algorithm
further clarifies the constraints that additivity imposes on principal
strata. For notational simplicity, we suppress covariates though our
analysis applies conditional on covariates.

We begin by observing that the additive form in
Definition~\ref{def:additive} induces a finite system of linear
equations.  Let
$\bm{\ell} = \{\ell^\add(d; y_0, \ldots, y_{K-1})\}_{d \in \cD, \by
  \in \cY^\cD} \in \R^{K M^K}$ denote the vector of all loss values,
$\bw^\add = \{\omega_k(d, y), \varpi(\by)\}_{d \in \cD, k \in \cD, y
  \in \cY, \by \in \cY^{K}} \in \R^{K^2M + M^K}$ denotes the
corresponding vector of weight and intercept values. The additive
structure determines a fixed $\abs{\bm{\ell}} \times \abs{\bw}$ matrix
$\bA^\add$ of zeros and ones such that, given $\bA^\add$, a loss
vector $\bm{\ell}$ is additive if and only if the following linear
equation has a solution,
\begin{align}\label{eq:linear_system}
    \bA^\add \bw^\add = \bm{\ell}.
\end{align}
An explicit example is provided in Appendix~\ref{app:weights}. This
leads to the following corollary of
Theorem~\ref{thm:add_necessary_sufficient}.

\begin{corollary}[Identification Verification of Counterfactual
  Risk]\label{cor:identification_verification} Suppose
  Assumptions~\ref{ass:consistency}--\ref{ass:ignorability} hold. Consider a
  counterfactual loss $\ell$ defined in Definition~\ref{def:loss}. The counterfactual risk difference $R_P(\pi; \ell) - R_P(\rho; \ell)$ is identifiable for every pair of policies $\pi, \rho \in \Delta(\cD)$ if and only if the linear system in Equation~\eqref{eq:linear_system} has a solution.
\end{corollary}
If the support of $Y(\cD)$ lies in a subset $S\subseteq \cY^\cD$, as in Proposition~\ref{pro:all_cf_loss_is_add}, 
then after restricting $\bA^\add$ and $\bm{\ell}$ in Equation~\eqref{eq:linear_system} to rows with $\by\in S$, Corollary~\ref{cor:identification_verification} yields necessary and sufficient criteria for identifiability on $S$.

The linear system in Equation~\eqref{eq:linear_system} can be checked
for a solution and solved using a standard linear program solver whose
time complexity is polynomial in $M$ but exponential in $K$. Whenever
a solution exists, the linear system directly yields the weight and
intercept functions, and thus a representation of the counterfactual
risk in terms of observable marginals. The solution, however, need not
be unique.

For exact identification, we remove the intercept terms from the
additive representation. This gives the restricted system
\begin{align}\label{eq:linear_system_exact}
   \bA^\exa \bw^\exa = \bm{\ell},
\end{align}
where
$\bw^\exa = \{\omega_k(d, y)\}_{d \in \cD, k \in \cD, y \in \cY} \in
\R^{K^2M}$ contains only the weight functions, $\bA^\exa$ is adjusted
accordingly, and $\bm{\ell}$ remains $KM^K$ dimensional.  The next
corollary shows a necessary and sufficient condition for exact
identification.
\begin{corollary}[Exact-Identification Verification of the
  Counterfactual Risk]\label{cor:exact_identification_verification}
    Suppose Assumptions~\ref{ass:consistency}--\ref{ass:ignorability} hold. Consider a
  counterfactual loss $\ell$ defined in Definition~\ref{def:loss}. The counterfactual risk $R_P(\pi; \ell)$ is identifiable for every policy $\pi \in \Delta(\cD)$ if and only if the linear system in Equation~\eqref{eq:linear_system_exact} has a solution.
\end{corollary}

Lastly, by further restricting the linear system given in
Equation~\eqref{eq:linear_system} to
\begin{align}\label{eq:linear_system_std}
    \bA^\std \bw^\std = \bm{\ell},
\end{align}
where
$\bw^\std = \{\omega_d(d, y)\}_{d \in \cD,\, y \in \cY} \in \R^{KM}$
contains only the diagonal weight functions, $\bA^\std$ is defined
accordingly, and $\bm{\ell} \in \R^{KM^K}$ as before.  We obtain a
necessary and sufficient condition under which a counterfactual loss
can be equivalently written as a standard loss.

\citet{li2024unit} propose a {\it numerical} algorithm for
verification of exact identification given experimental
data. Corollary~\ref{cor:identification_verification} extends their
result to risk differences, and proposes a {\it symbolic} verification
algorithm that does not require data and operates directly on a loss
function itself.  Our algorithm also yields an explicit decomposition
of counterfactual risk into observable marginals.

\subsection{Restrictions on Principal Strata}
\label{subsec:principal_strata}

Corollaries~\ref{cor:identification_verification} and~\ref{cor:exact_identification_verification} characterize the relationship between a counterfactual loss and principal strata required for identification. Using these results, we derive the exact restrictions on loss values needed for identification in the binary decision and binary outcome case though the results can be generalized. As before, we suppress covariates, though all statements can be interpreted conditional on them. We refer to Appendix~\ref{app:weights} for details. 

Let $\mathrm{im}(\bA^\add)$ denote the
image of $\bA^\add$, i.e., the space of additive counterfactual
losses. If $\bm{\ell} \in \mathrm{im}(\bA^\add)$, then there exists a solution $\bw$ to
Equation~\eqref{eq:linear_system} and the counterfactual risk difference is identifiable.
By standard linear algebra, $\mathrm{im}(\bA^\add) = \ker\lt((\bA^\add)^\top\rt)^\perp$
\citep{Lax2007}, i.e., the orthogonal complement of the kernel of
$(\bA^\add)^\top$. Let $\bm{N}^\add$ be a matrix whose columns form a
basis for $\ker\lt((\bA^\add)^\top\rt)$. Then
$\bm{\ell} \in \mathrm{im}(\bA^\add)$ if and only if
$\lt(\bm{N}^\add\rt)^\top \bm{\ell} = \bm{0}$. This condition
specifies the strata-level restrictions on the loss needed for
identification. Similar reasoning applies to $\bA^\exa$ and
$\bA^\std$.

\begin{table}[t]
\centering
\begin{tabular}{|c|c|c|}
\hline
\multirow{2}{*}{\textbf{Principal Strata}} &  \multicolumn{2}{c|}{\textbf{Decision}} \\
\cline{2-3} & $D = 0$ & $D = 1$ \\
     \hline
     \begin{tabular}{c}
\textbf{Never Responder} \\
$(Y(0), Y(1)) = (0, 0)$
\end{tabular} & \cellcolor{orange!60} $\ell(0; 0, 0)$ & \cellcolor{blue!60}$\ell(1; 0, 0)$\\
     \hline
     \begin{tabular}{c}
\textbf{Harmed} \\
$(Y(0), Y(1)) = (1, 0)$
\end{tabular}& \cellcolor{blue!15} $\ell(0; 1, 0)$ & \cellcolor{orange!60} $\ell(1; 1, 0)$\\
     \hline
      \begin{tabular}{c}
\textbf{Responsive} \\
$(Y(0), Y(1)) = (0, 1)$
\end{tabular}& \cellcolor{blue!60} $\ell(0; 0, 1)$ & \cellcolor{orange!15} $\ell(1; 0, 1)$\\
     \hline
     \begin{tabular}{c}
\textbf{Always Responder} \\
$(Y(0), Y(1)) = (1, 1)$
\end{tabular}&  \cellcolor{orange!15} $\ell(0; 1, 1)$ & \cellcolor{blue!15} $\ell(1; 1, 1)$\\
     \hline
\end{tabular}
\caption{Counterfactual Loss for Principal Strata in the Binary Case
  $\ell(D; Y(0), Y(1))$.}  \label{tab:principal_strata}
\end{table}

Table~\ref{tab:principal_strata} shows all eight possible loss values
(two possible decisions across four principal strata), i.e.,
$\ell(d; y_0, y_1)$ for $d, y_0, y_1 \in \{0,1\}$, based on the
example given in Section~\ref{subsec:hippocratic_oath}.  Restricting to
the class of standard losses given in
Equation~\eqref{eq:linear_system_std}, the equality constraints
implied by $\lt(\bm{N}^\std\rt)^\top \bm{\ell} = \bm{0}$ are given by,
\begin{equation}
    \ell(0; 0, 0) = \ell(0; 0, 1), \quad \ell(0; 1, 0) = \ell(0; 1, 1),\quad
    \ell(1; 0, 0) = \ell(1; 1, 0), \quad \ell(1; 0, 1) = \ell(1; 1, 1). \label{eq:standard}
\end{equation}
  
These represent the familiar restrictions from standard statistical
decision theory, where the loss depends only on the chosen decision
and its realized outcome. Indeed, when $D=0$, never-responders and
responders, for whom we have $Y(0)=0$, share the same loss. Similarly,
always-responders and harmed units, for whom we have $Y(0)=1$, have the
same loss. When $D=1$, never-responders and harmed units yield the
same loss, as do always-responders and responders. Together, this
imposes four equality constraints. In
Table~\ref{tab:principal_strata}, these correspond to equality between
the light orange and purple cells within each column, and separately
between the dark orange and purple cells.  In this case, only four of
the twelve possible weight functions are non-zero:
\begin{align*}
    \omega_0(0,0) = \ell(0; 0, y_1), \quad \omega_0(0,1) = \ell(0;1,y_1),\quad
    \omega_1(1,0) = \ell(1;y_0,0), \quad \omega_1(1,1) = \ell(1;y_0,1),
\end{align*}
for $y_0, y_1 \in \{0,1\}$.

We can further relax these conditions by considering the exact
identification case in Equation~\eqref{eq:linear_system_exact}, where
$\lt(\bm{N}^\exa\rt)^\top \bm{\ell} = \bm{0}$ yields,
\begin{align}
    \ell(d; 0, 0) + \ell(d; 1, 1) = \ell(d; 0, 1) +\ell(d; 1, 0), \label{eq:exact}
\end{align}
where for each decision $d$, the sum of losses for never- and always-responders equals that for the harmed and responsive units. For example, 
Equation~\eqref{eq:exact} implies that, for each fixed decision $d$,
the decision-maker assigns the same risk to a population composed of
$50\%$ always-responders and $50\%$ never-responders as to a population
composed of $50\%$ responders and $50\%$ harmed units. In
Table~\ref{tab:principal_strata}, this corresponds to the orange cells
summing to the purple cells within each column. As mentioned earlier,
the restriction is identical to the \emph{gain equality} of
\citet{li2019unit}.  Given these two equality restrictions, if we let
$\omega_0(1,1) = a$ and $\omega_1(0,1) = b$, we can express the
remaining six weights as follows,
\begin{align*}
    \omega_0(0,0) = \ell(0; 0,1) - b, \quad \omega_0(0,1) = \ell(0; 1,1) - b, &\quad
    \omega_1(1,0) = \ell(1; 1,0) - a, \quad \omega_1(1,1) = \ell(1; 1,1) - a,
    \\
    \omega_0(1,0) = \ell(1; 0,1) - \ell(1; 1,1) + a, &\quad \omega_1(0,0) = \ell(0; 1,0) - \ell(0; 1,1) + b.
\end{align*}

We can further relax the equality constraints to achieve
identification only up to a constant by considering
Equation~\eqref{eq:linear_system}. In this case,
$\lt(\bm{N}^\add\rt)^\top \bm{\ell} = \bm{0}$ imposes a single
restriction,
\begin{equation}
    [\ell(1; 0, 0)-\ell(0; 0, 0)] + [\ell(1; 1, 1) - \ell(0; 1, 1)]  = [\ell(1; 1, 0) - \ell(0; 1, 0)] + [\ell(1; 0, 1) - \ell(0; 0, 1)], \label{eq:constant}
\end{equation}
which represents the sum of loss differences between the two decisions
for never-responders and always-responders is equal to the corresponding
sum for the harmed and responsive units. 
For example, equation~\eqref{eq:constant} implies that the
decision-maker makes the same treatment recommendation for a
population composed of $50\%$ always-responders and $50\%$ never-responders as for a population composed of $50\%$ responders and $50\%$
harmed units.
In Table~\ref{tab:principal_strata}, this restriction corresponds to the
sum of orange cells equaling the sum of purple cells across both
columns. Solving the system under this restriction yields the weights,
\begin{align*}
    \omega_0(0,0) = \ell(0; 0,1) - b - c, &\quad \omega_0(0,1) = \ell(0; 1,1) - b - e, \\
\omega_1(1,0) = \ell(1; 1,0) - a - d, &\quad \omega_1(1,1) = \ell(1; 1,1) - a - e, \\
\omega_0(1,0) = \ell(1; 0,1) - \ell(1; 1,1) + a - c + e , &\quad \omega_1(0,0) = \ell(0; 1,0) - \ell(0; 1,1) + b - d + e , \\
\varpi(0,0) = \ell(1; 0,0) - \ell(1; 1,0) - &\ell(1; 0,1) + \ell(1; 1,1) + c + d - e,
\end{align*}
where $\omega_0(1,1) = a, \omega_1(0,1) = b, \varpi(0,1) = c,
\varpi(1,0) = d, \varpi(1,1) = e$ are free parameters.

\begin{table}[t]
\centering
\begin{tabular}{|w{c}{2.75in}|w{c}{0.8in}|w{c}{1.2in}|}
\hline
\multirow{2}{*}{\textbf{Restrictions on loss functions}}   & \textbf{Degrees of} & \multirow{2}{*}{\textbf{Identification}}  \\
& \textbf{freedom} & 
\\ 
\hline
\begin{tabular}{c}
light (dark) orange = light (dark) purple \\
within each column (Equation~\eqref{eq:standard})
\end{tabular}
& 4 &\cellcolor{green!30} Exact 
\\
\hline
\begin{tabular}{c}
sum of orange $=$ sum of purple \\
within each column (Equation~\eqref{eq:exact}) \\
\end{tabular} & 6 &\cellcolor{green!30} Exact* 
\\
\hline
\begin{tabular}{c}
sum of orange = sum of purple \\
across two columns (Equation~\eqref{eq:constant})
\end{tabular} & 7 &\cellcolor{olive!30} Constant* 
\\
\hline
No restriction & 8 & \cellcolor{red!30} Unidentifiable 
\\
\hline
\end{tabular}
\caption{Summary of identifiability restrictions referring to Table~\ref{tab:principal_strata}   under which the risk
  is either exactly identifiable (``Exact'' in green), identifiable up
  to a constant (``Constant'' in yellow), or unidentifiable (in
  red). Strong ignorability is assumed. An asterisk $*$ signifies a necessary and sufficient condition. The second column shows the number of free parameters in the loss specification.}
\label{tab:identifiability_binary_case}
\end{table}

The third column of Table~\ref{tab:identifiability_binary_case}
summarizes the identification results discussed in this subsection.
The table illustrates how the identification conditions translate into
explicit equality constraints on the loss values across principal strata.

\section{Concluding Remarks}

In this paper, we generalize the classical statistical decision theory
for treatment choice by introducing counterfactual losses that
depend on all potential outcomes. While standard losses evaluate
decisions based solely on realized outcomes, counterfactual losses
additionally take into account what would have happened under
alternative decisions. Counterfactual losses can account
for ethical and other considerations that are not possible to express
with standard losses.

While counterfactual losses are much more expressive than the
standard losses, a central challenge is the difficulty of
identification. Since only one potential outcome is observed per unit,
counterfactual risks are generally unidentifiable. We show that under
the standard assumption of strong ignorability, a counterfactual loss
leads to an identifiable risk if and only if it is additive in the
potential outcomes. Moreover, we establish that such additive
counterfactual losses do not have an equivalent standard loss
representation so long as decisions are non-binary. Finally, our proposed 
symbolic linear inverse formulation provides a practical way to verify whether a given
counterfactual loss is additive.

Our results should allow practitioners to consider a wider class of
loss functions when evaluating decision rules and learning optimal
policies.  Promising directions for future research include extending
the framework to continuous decisions and outcomes, relaxing the
strong ignorability assumption in the identification analysis
\citep[see][]{ben-michaelDoesAIHelp2024} and studying the sampling
risk in Equation~\eqref{eq:sampling_risk} when policies are learned
from finite data under counterfactual losses.

\newpage
\bibliography{ref}

\newpage
\appendix
\renewcommand\thefigure{\thesection.\arabic{figure}}    
\setcounter{figure}{0}
\numberwithin{figure}{section}
\renewcommand\thetable{\thesection.\arabic{table}}    
\setcounter{table}{0} 
\numberwithin{table}{section}
\renewcommand\theequation{\thesection.\arabic{equation}}    
\setcounter{equation}{0}
\numberwithin{equation}{section}
\renewcommand\theproposition{\thesection.\arabic{proposition}}    
\setcounter{proposition}{0}
\numberwithin{proposition}{section}

\begin{center}
 \huge Supplementary Appendix 
\end{center}

\section{Numerical Illustrations for Section~\ref{sec:examples}}
\label{app:numerical_illustrations}
\subsection{Binary Decisions}\label{app:numerical_binary}

We provide a numerical example for Section~\ref{subsec:hippocratic_oath} to build intuition. Normalize the cost of no treatment to $c_0=0$ and set the treatment cost to $c_1=0.1$. Set the counterfactual losses for forgone death and survival to $\tilde \ell_0=0.25$ and $\tilde \ell_1=0.5$, so that $\tilde \ell_0<\tilde \ell_1$ encodes regret from failing to choose a life-saving option. Suppose that $45\%$ of the population are responders and $30\%$ are harmed. Always survivors and never survivors do not affect the induced decision rules in this setup, so we leave them unspecified.

Figure~\ref{fig:decision_boundaries_binary} displays the resulting decision recommendations under the three losses. The decision boundaries under the standard and additive losses are parallel. The additive loss shifts the boundary by $\tilde \ell_0-\tilde \ell_1$, illustrating that a shifted standard loss can replicate the behavior. By contrast, the fully counterfactual loss yields a boundary that is not parallel. The dotted $45^\circ$ line is the identification line, along which the loss admits an additive (and hence identifiable) representation.

\begin{figure}[htbp]
  \centering
  \includegraphics[width=0.9\linewidth,keepaspectratio]{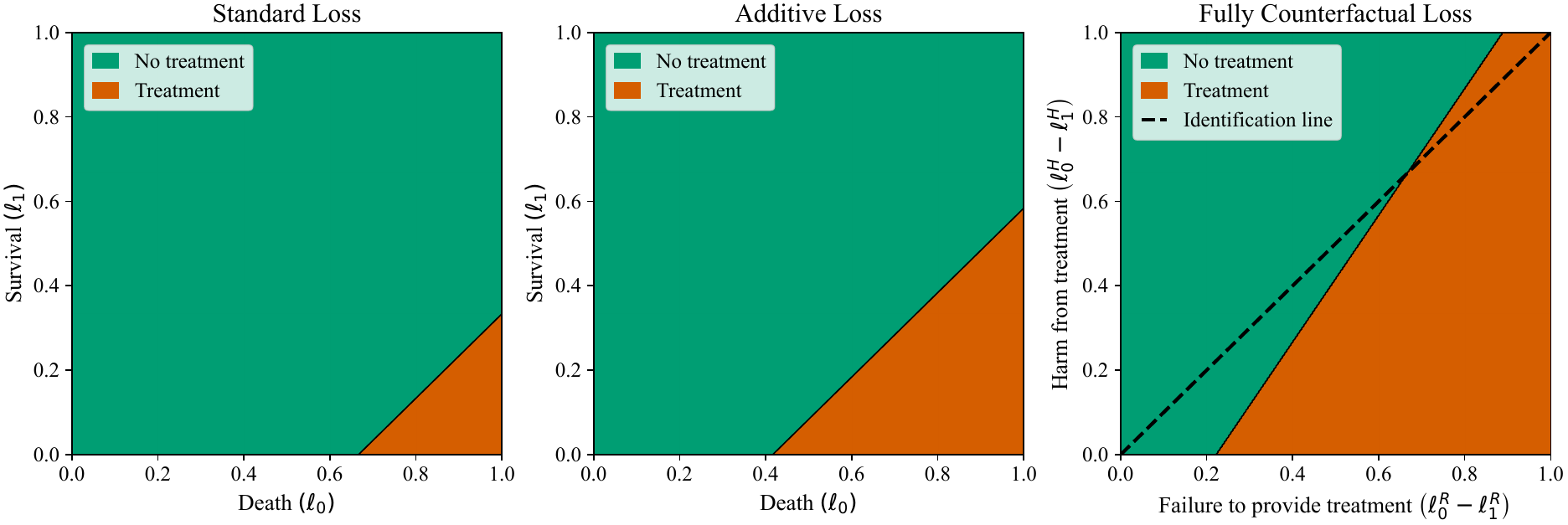}
  \caption{Decision recommendations in the binary medical example of Section~\ref{app:numerical_binary} under the standard, additive counterfactual, and fully counterfactual loss.
  The standard and additive losses have parallel decision boundaries, whereas the fully counterfactual loss does not. The dotted line in the fully counterfactual panel marks the identification region for that loss.}
  \label{fig:decision_boundaries_binary}
\end{figure}

\subsection{Trichotomous Decisions}\label{app:numerical_trichotomous}

We provide a numerical example for Section~\ref{subsec:trichotomous_decision} to build intuition. The crime losses are normalized as $\ell_0 = 1$
and $\ell_1 = 0$.  Suppose that the cost of release on their own
recognizance is negligible ($c_0 = 0$), whereas release with
supervision incurs a small loss ($c_1 = 0.03$).  Detention is assumed
to be the most costly ($c_2 = 0.3$). We set the regret of unnecessary
detention to $r_{20} = r_{21} = r$, and that of unnecessary
supervision to $r_{10} = r/10$.  Lastly, we assume a symmetric penalty
$p_0 = p_1 = p$ so that harm of release to the public equals harm of
supervised release to the public.

Since detention leads to no crime ($Y(2) = 1$), the population
consists of four principal strata: those who never commit crime
($(Y(0), Y(1), Y(2)) = (1, 1, 1)$), those who commit crime only if
released on their own recognizance ($(Y(0), Y(1), Y(2)) = (0, 1, 1)$),
those who commit crime only if released under supervision
($(Y(0), Y(1), Y(2)) = (1, 0, 1)$), and those who commit crime unless detained
($(Y(0), Y(1), Y(2)) = (0, 0, 1)$). For this example, we set the
population shares of the above principal strata to 20\%, 25\%, 20\%, and
35\%, respectively.

\begin{figure}[t]
  \centering
  \includegraphics[width=0.9\linewidth,keepaspectratio]{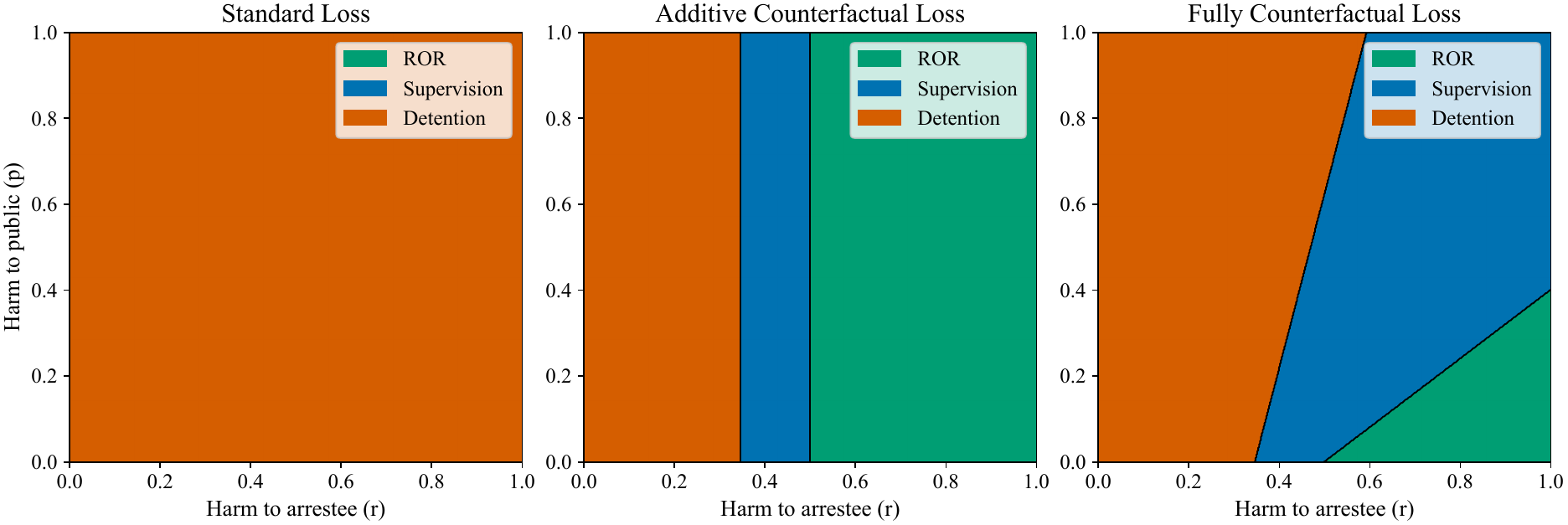}
  \caption{
  Decision recommendations in the trichotomous criminal justice example of Section~\ref{app:numerical_trichotomous} under the standard, additive counterfactual, and fully counterfactual loss. The standard loss gives the same decision
    recommendation throughout, as it ignores harm to the public $p$ and harm to
    the arrestee $r$. The additive counterfactual loss varies with $r$, whereas the fully counterfactual loss varies with both types of harm.}
  \label{fig:decision_boundaries}
\end{figure}

Figure~\ref{fig:decision_boundaries} shows the decision
recommendations under the three loss functions considered in this
subsection. Under the standard loss, detention is always optimal as
both harms are ignored. The additive loss incorporates harm to the
arrestee, leading to release on their own recognizance when $r$ is
sufficiently large. The fully counterfactual loss accounts for both
types of harm, and recommends alternative decisions depending on their
relative magnitude.  The example illustrates the three loss functions
can yield different decision recommendations.  In comparison to the
standard loss, the counterfactual loss functions incorporate the
notions of harm and yield a richer analysis. 

\section{Mathematical Proofs}

\subsection{Proof of Theorem~\ref{thm:additivity_marginal_risk}}
\label{app:additivity_implies_marginal_risk}

By the definition of the counterfactual conditional risk (Definition~\ref{def:risk}), we have,
\begin{align}
    & R_{P_{Y(\cD) \mid \bX = \bx}}(\pi; \ell^\add)\nonumber \\
    = \ & \sum_{d\in \cD} \sum_{\by \in \cY^\cD} \lt(\varpi(\by, \bx) + \sum_{k\in \cD} \omega_k(d, y_k,\bx)\rt) \nonumber \\
    & \hspace{1in} \cdot \pi(d; \bx) \cdot P(Y(0) = y_0,\ldots, Y(K-1)=y_{K-1}\mid \bX = \bx) \nonumber\\
    = \ & \sum_{d\in \cD} \sum_{\by \in \cY^\cD} \sum_{k\in \cD} \omega_k(d, y_k,\bx)  \cdot \pi(d; \bx) \cdot P(Y(0) = y_0,\ldots, Y(K-1)=y_{K-1}\mid \bX = \bx) \nonumber\\
    &\quad + \sum_{d\in \cD} \sum_{\by \in \cY^\cD} \varpi(\by, \bx) \cdot \pi(d; \bx) \cdot P(Y(0) = y_0,\ldots, Y(K-1)=y_{K-1}\mid \bX = \bx), \label{eq:risk_decomp}
\end{align}
where the first equality follows from Definition~\ref{def:additive}.  We now consider each of the two summation terms in Equation~\eqref{eq:risk_decomp}.  For notational convenience, we write $\by_{-k} = (y_0, \ldots, y_{k-1}, y_{k+1}, \ldots, y_{K-1})$ for any $k \in \cD$. We can rewrite the first term as,
\begin{align*}
    &  \sum_{d\in \cD} \sum_{\by \in \cY^\cD} \sum_{k\in \cD} \omega_k(d, y_k,\bx)  \cdot \pi(d; \bx) \cdot P(Y(0) = y_0,\ldots, Y(K-1)=y_{K-1}\mid \bX = \bx) \\
    = \ & \sum_{d \in \cD} \sum_{k\in \cD} \sum_{y_k \in \cY} \sum_{\by_{-k} \in \cY^{K-1}} \omega_k(d, y_k,\bx) \cdot \pi(d; \bx) \cdot P(Y(0) = y_0,\ldots, Y(K-1)=y_{K-1}\mid \bX = \bx) \\
    = \ & \sum_{d \in \cD} \sum_{k \in \cD} \sum_{y_k \in \cY} \omega_k(d, y_k,\bx) \cdot \pi(d; \bx) \sum_{\by_{-k} \in \cY^{K-1}}  P(Y(0) = y_0,\ldots, Y(K-1)=y_{K-1}\mid \bX = \bx) \\
    = \ & \sum_{d \in \cD} \sum_{k \in \cD} \sum_{y_k\in \cY} \omega_k(d, y_k,\bx) \cdot \pi(d; \bx)  \cdot P(Y(k) = y_k\mid \bX = \bx) \\
    = \ & \sum_{d \in \cD} \sum_{k \in \cD} \sum_{y \in \cY} \omega_k(d, y,\bx) \cdot \pi(d; \bx)  \cdot P(Y(k) = y \mid \bX = \bx) \\
    = \ & \sum_{d \in \cD} \sum_{k \in \cD} \sum_{y \in \cY} \omega_k(d, y,\bx) \cdot \pi(d; \bx)  \cdot P(Y = y \mid D = k, \bX = \bx),
\end{align*}
where the last equality follows from Assumptions~\ref{ass:consistency} and~\ref{ass:ignorability}.
    
We next rewrite the second summation in
Equation~\eqref{eq:risk_decomp} as follows,
\begin{align*}
    &\sum_{d\in \cD} \sum_{\by \in \cY^\cD}  \varpi(\by, \bx) \cdot \pi(d; \bx)  \cdot P(Y(0) = y_0,\ldots, Y(K-1)=y_{K-1} \mid \bX = \bx) \\
    = \ & \sum_{\by \in \cY^\cD}  \varpi(\by, \bx) P(Y(0) = y_0,\ldots, Y(K-1)=y_{K-1} \mid \bX = \bx) \sum_{d\in \cD} \pi(d; \bx) \\
    = \ & \sum_{\by \in \cY^\cD}  \varpi(\by, \bx) P(Y(0) = y_0,\ldots, Y(K-1)=y_{K-1}\mid \bX = \bx).
    \end{align*}
    This completes the proof.
\qed

\subsection{Proof of Theorem~\ref{thm:add_necessary_sufficient}}
\label{app:add_necessary_sufficient}
First suppose that $\ell$ is additive according to
Definition~\ref{def:additive}. Then, under
Assumptions~\ref{ass:consistency}--\ref{ass:bounded_loss}, by
Theorem~\ref{thm:additivity_marginal_risk}, for a given $P \in \cP$
and policy $\pi: \cX \to \Delta(\cD)$, the conditional counterfactual
risk takes the following form,
\[
    R_{P_{Y(\cD) \mid \bX = \bx}}(\pi; \ell) = \sum_{d\in \cD} \sum_{k\in \cD} \sum_{y\in \cY} \omega_k(d, y, \bx) \cdot \pi(d; \bx) \cdot P(Y = y\mid D = k, \bX = \bx) + C_{P_{Y(\cD) \mid \bX}}(\bx),
\]
where $C_{P_{Y(\cD) \mid \bX}}(\bx)$ does not depend on the policy
$\pi$. Therefore, for policies $\pi$ and $\rho$, the conditional risk
difference, which is given by,
\begin{align*}
     & R_{P_{Y(\cD) \mid \bX = \bx}}(\pi; \ell) -  R_{P_{Y(\cD) \mid \bX
         = \bx}}(\rho; \ell) \\
     =  & \sum_{d\in \cD} \sum_{k\in \cD} \sum_{y\in \cY} \omega_k(d, y, \bx) \cdot \{\pi(d; \bx) - \rho(d; \bx) \} \cdot P(Y = y\mid D = k, \bX = \bx),
\end{align*}
depends only on the observed conditional law $Q_P(D,Y\mid\bX)$.  This
is because,
\[
   P(Y = y\mid D = k, \bX = \bx) = \frac{Q_P(D = k, Y = y \mid \bX = \bx)}{Q_P(D = k \mid \bX = \bx)} \qquad P_{\bX}\text{-a.s.}
\]
Integrating over $P_{\bX}$ identifies
$R_{P}(\pi; \ell) - R_{P}(\rho; \ell)$ according to
Definition~\ref{def:identifiability} by the subsequent discussion in
Section~\ref{sec:nonparametric_identification}.

Now, suppose that for every pair of policies $(\pi, \rho)$, the risk
difference $R_{P}(\pi; \ell) - R_{P}(\rho; \ell)$ is identifiable
uniformly across $P \in \cP$ according to
Definition~\ref{def:identifiability}. Fix $d_0 \in \cD$ and let
$\pi_d := d$ be the constant policy that always chooses $d$. By
assumption, $R_{P}(\pi_d; \ell) - R_{P}(\pi_{d_0}; \ell)$ is
identified for every $d \in \cD$.

Fix $\bx \in \cX$. Associate $\Delta(\cY^\cD)$ with the simplex
$\Delta_{M^K - 1} = \{\bp \in \R^{M^K} : \sum_{\by \in \cY^\cD}
p_{\by} = 1 \text{ and } p_{\by} \geq 0\}$. Let
$\bp, \bp^\prime \in \Delta_{M^K - 1}$ with the same marginals, i.e.,
$\bC \bp = \bC \bp^\prime$, where $\bC$ is the marginalization matrix
of Lemma~\ref{lem:marginalization}.  Define
$P_{Y(\cD) \mid \bX},P_{Y(\cD) \mid \bX}^\prime \in \Delta(\cY^\cD
\times \cX)$ as,
\[
    P_{Y(\cD), \bX}(Y(\cD) = \by, \bX \in A) = p_{\by} \cdot \delta_{\bx}(A), \qquad P_{Y(\cD), \bX}^\prime(Y(\cD) = \by, \bX \in A) = p^\prime_{\by} \cdot \delta_{\bx}(A),
\]
where $\delta_{\bx}(\cdot)$ is the covariate Dirac measure at $\bX =
\bx$. Given a valid assignment mechanism $e$ (e.g., choose $e(d; \bx)
= 1/K$), let $P = e \cdot P_{Y(\cD), \bX}$ and $P^\prime = e \cdot
P_{Y(\cD), \bX}^\prime$ defined as in
Equation~\eqref{eq:joint_law}. By construction, we have $P, P^\prime
\in \cP$.  Then,
\begin{align*}
    Q_P(D = k, Y = y, \bX \in A) &= \sum_{\by \in \cY^{\cD}} \indicator{y_k = y} P(D = k, Y(\cD) = \by, \bX \in A)
    \\
    &= \sum_{\by \in \cY^{\cD}} \indicator{y_k = y} \cdot \int_{A} e(k; \bx) \cdot P_{Y(\cD) \mid \bX}(Y(\cD) = \by \mid \bX = \bx) dP_{\bX}(\bX=\bx),
    \\
    &= \sum_{\by \in \cY^{\cD}} \indicator{y_k = y} \cdot e(k; \bx) \cdot p_{\by} \cdot \delta_{\bx}(A)
    \\
    &= e(k; \bx) \cdot \delta_{\bx}(A) \cdot \sum_{\by \in \cY^{\cD}} \indicator{y_k = y} \cdot p_{\by}
    \\
    &= e(k; \bx) \cdot \delta_{\bx}(A) \cdot \sum_{\by \in \cY^{\cD}} \indicator{y_k = y} \cdot p^\prime_{\by}
    \\
    &= Q_{P^\prime}(D = k, Y = y, \bX \in A),
\end{align*}
implying that the observed laws, $P$ and $P^\prime$, coincide.
Definition~\ref{def:identifiability} implies
\begin{align*}
    R_{P}(\pi_d; \ell) -  R_{P}(\pi_{d_0}; \ell) = R_{P^\prime}(\pi_d; \ell) -  R_{P^\prime}(\pi_{d_0}; \ell).
\end{align*}
Since both states put mass on $\bX = \bx$, this is equivalent to
\[
    \sum_{\by \in \cY^\cD} \{\ell(d; \by, \bx) - \ell(d_0; \by, \bx)\} p_{\by} = \sum_{\by \in \cY^\cD} \{\ell(d; \by, \bx) - \ell(d_0; \by, \bx)\} p^\prime_{\by},
\]
where the statement of
Lemma~\ref{lem:additivity}\ref{lem:additivity_b} is applied to
$a_{\by}(d, \bx) := \ell(d; \by, \bx) - \ell(d_0; \by, \bx)$. By
Lemma~\ref{lem:additivity}\ref{lem:additivity_d}, there exist
$\omega_{k, y}(d, \bx) \in \R$ and $k \in \cD, y \in \cY$ such that
the following equality holds for all $\by \in \cY^\cD$,
\begin{align}\label{eq:thm2_add_l}
  \ell(d; \by, \bx) - \ell(d_0; \by, \bx) = \sum_{k \in \cD} \omega_{k, y_k}(d, \bx).
\end{align}
Since $\cD$ and $\cY$ are finite, $\{\omega_{k, y}(d, \bx)\}_{d, k \in \cD, y \in \cY}$ can be chosen as bounded measurable functions of $\bx$.
Define $\varpi(\by, \bx) := \ell(d_0; \by, \bx)$ and let
\begin{align*}
\omega_k(d, y, \bx) :=
\begin{cases}
0, & d = d_0, \\
\omega_{k,y}(d, \bx), & d \neq d_0 .
\end{cases}
\end{align*}
Then, Equation~\eqref{eq:thm2_add_l} yields,
\[
     \ell(d; \by, \bx) = \varpi(\by, \bx) + \sum_{k \in \cD} \omega_k(d, y_k, \bx).
\]
Since $\bx \in \cX$ was arbitrary, $\ell$ is additive by
Definition~\ref{def:additive}. This completes the proof.

\qed

\subsection{Proof of Corollary~\ref{cor:standard}}
\label{app:standard}

The result follows immediately from the proof of Theorem~\ref{thm:add_necessary_sufficient} by considering the risk instead of the risk difference.\qed

\subsection{Proof of Proposition~\ref{pro:binary_add_std_equiv}}\label{app:binary_add_std_equiv}

We first show the existence of such a standard loss by proving the decomposition 
\[
    \ell^\add(d; \by, \bx) = \ell^\std(d; y_d, \bx) + h(\by, \bx),
\]
where $\ell^\std$ is the equivalent standard loss and $h(\by, \bx)$ is
the decision independent term given in
Proposition~\ref{pro:binary_add_std_equiv}. By
Definition~\ref{def:additive}, direct calculation yields,
\begin{align*}
    &\ell^\add(d; \by, \bx) - \lt[\ell^\std(d; y_d, \bx) + h(\by, \bx)\rt]
    \\
    = \ &\lt[\omega_d(d, y_d, \bx) + \omega_{1-d}(d, y_{1-d}, \bx) + \varpi(\by, \bx) \rt]
    \\
    &\quad - \lt[\omega_d(d, y_d, \bx) - \omega_d(1-d, y_d, \bx) + \omega_0(1, y_0, \bx) + \omega_1(0, y_1, \bx) + \varpi(\by, \bx)\rt]
    \\
     = \ & \lt[\omega_{1-d}(d, y_{1-d}, \bx) + \omega_d(1-d, y_d, \bx)\rt] - \lt[\omega_0(1, y_0, \bx) + \omega_1(0, y_1, \bx)\rt]
    \\
    = \ & 0.
\end{align*}
This proves the decomposition.

Next, we show that $\ell^\std$ induces the same ranking over policies
as $\ell^\add$.  Fix $P \in \cP$ and policy
$\pi: \cX \to \Delta(\cD)$, then by Definition~\ref{def:risk}, we have,
\begin{align*}
    R_P(\pi; \ell^\add) &= \E_{P^\pi}[\ell^\add(D^\pi; Y(0), Y(1), \bX)]
    \\
    &= \E_{P^\pi}[\ell^\std(D^\pi; Y(D^\pi), \bX)] + \E_{P_{Y(\cD), \bX}}[h(Y(0), Y(1), \bX)]
    \\
    &= R_P(\pi; \ell^\std) + \E_{P_{Y(\cD), \bX}}[h(Y(0), Y(1), \bX)].
\end{align*}
Note that
$\E_{P^\pi}[h(Y(0), Y(1), \bX)] = \E_{P}[h(Y(0), Y(1), \bX)] =
\E_{P_{Y(\cD), \bX}}[h(Y(0), Y(1), \bX)]$ as $P$ and $P^\pi$ have the
same marginal law for $(Y(0), Y(1), \bX)$.  Since the second term is
independent of $\pi$, this proves that the risk difference does not
depend on the policy.

For uniqueness, suppose that for another bounded standard loss
$\tilde \ell^\std$, the risk difference
$R_P(\pi; \ell^\add) - R_P(\pi; \tilde \ell^\std)$ is independent of
$\pi$ for every $P \in \cP$. This implies that for a constant policy
$\pi_d := d \in \cD$, we have,
\begin{align*}
    & \left[R_P(\pi_d; \ell^\add) - R_P(\pi_d; \tilde \ell^\std)\right] - \left[R_P(\pi_d; \ell^\add) - R_P(\pi_d; \ell^\std)\right] \\
    = \ & R_P(\pi_d; \ell^\std) - R_P(\pi_d; \tilde \ell^\std)
    \\
    = \ & \E_P[\ell^\std(d, Y(d), \bX) - \tilde \ell^\std(d, Y(d), \bX)] 
\end{align*}
does not depend on $d \in \{0, 1\}$ for every $P$. Hence, the
following equality holds uniformly over all $P\in\cP$,
\begin{align}\label{eq:prop1_equality}
    \E_P[\ell^\std(0, Y(0), \bX) - \tilde \ell^\std(0, Y(0), \bX)] = \E_P[\ell^\std(1, Y(1), \bX) - \tilde \ell^\std(1, Y(1), \bX)].
\end{align}
In particular, the equality holds under any degenerate law that
concentrates on a single value of $(Y(0),Y(1),\bX)$. Fix $\bx\in\cX$
and $y,y'\in\cY$, set $e(d;\bx)=1/2$ for $d\in\{0,1\}$, and define, 
\[
    P(D = d, Y(0) = y_0, Y(1) = y_1, \bX \in A) = \frac{1}{2} \indicator{y_0 = y, y_1 = y^\prime} \delta_{\bx}(A),
\]
where $A \subseteq \cX$ and $\delta_{\bx}(\cdot)$ is the Dirac measure at $\bX = \bx$.
Then $P \in \cP$. Substituting this $P$ into
Equation~\eqref{eq:prop1_equality} yields, 
\begin{align}\label{eq:prop1_equality2}
    \ell^\std(0, y, \bx) - \tilde \ell^\std(0, y, \bx) = \ell^\std(1, y^\prime, \bx) - \tilde \ell^\std(1, y^\prime, \bx).
\end{align}
Taking $y'=y$ shows that $\ell^\std(d,y,\bx)-\tilde\ell^\std(d,y,\bx)$
does not depend on $d$. Plugging this back into
Equation~\eqref{eq:prop1_equality} and varying $y,y'$ implies that the
same difference cannot depend on $y$ either. Hence,
$\ell^\std(d,y,\bx)-\tilde\ell^\std(d,y,\bx)$ is constant in $(d,y)$.
Therefore, there exists a bounded, measurable function $c: \cX \to \R$
such that
$\ell^\std(d; y, \bx) - \tilde \ell^\std (d; y, \bx)= c(\bx)$, which
proves uniqueness.

Finally, the surjectivity of the map from an additive counterfactual
loss to an equivalent standard loss follows immediately from the fact
that any standard loss is a special case of an additive counterfactual
loss. However, the map is not injective. Given any standard loss
$\ell^{\std}(d; y_d, \bx)$, fix a bounded measurable function
$h:\cY^2 \times \cX \to \R$ and define
$\ell^\add(d; \by, \bx) = \ell^\std(d; y_d, \bx) + h(\by, \bx)$. Then,
by the first part of this proposition, this corresponds to an additive
counterfactual loss with the same policy ordering. Since $h$ was
arbitrary, there exist infinitely many additive counterfactual losses
that map to the same standard loss.  \qed

\subsection{Proof of Proposition~\ref{prop:counterfactual_needed}}\label{app:counterfactual_needed}

We will prove the equivalent negated statement.  Let $\ell^\add$
denote an additive counterfactual loss defined in
Definition~\ref{def:additive}. Then, there exists an equivalent
standard loss $\ell^\std$ such that for every $P \in \cP$ the risk
difference, $R_P(\pi;\ell^\add)-R_P(\pi;\ell^\std)$, is independent of
$\pi$ if and only if all weights are off-diagonal additively separable
for some bounded, measurable functions $f_k, g_k$.  In that case, the
equivalent standard loss is given by,
\begin{align}\label{eq:prop2_equiv_std_loss}
    \ell^\std(d; y_d, \bx) = \omega_d(d, y_d, \bx) - g_d(y_d, \bx) + \sum_{k \in \cD, k \neq d} f_k(d, \bx).
\end{align}

Assume first that all weights are off-diagonal additively separable. That is, for every $k \in \cD$ there exist bounded, measurable functions $f_k:\cD \setminus \{k\} \times \cX \to \R$ and $g_k: \cY \times \cX \to \R$ such that
\[
    \omega_k(d, y, \bx) = f_k(d, \bx) + g_k(y, \bx), \qquad d \in \cD \setminus \{k\}.
\]
Then,
\[
    \ell^\add(d; \by, \bx) = \varpi(\by, \bx) + \omega_d(d, y_d, \bx) + \sum_{k \in \cD, k \neq d} \{f_k(d, \bx) + g_k(y_k, \bx)\}.
\]
Subtracting $\ell^\std$ defined in Equation~\eqref{eq:prop2_equiv_std_loss} yields
\begin{align*}
     \ell^\add(d; \by, \bx) -  \ell^\std(d; y_d, \bx) = \varpi(\by, \bx) + \sum_{k \in \cD} g_k(y_k, \bx) = h(\by, \bx),
\end{align*}
which does not depend on the decision. Taking expectations gives
\[
    R_P(\pi; \ell^\add) - R_P(\pi; \ell^\std) = \E_P[h(Y(0), \ldots, Y(K-1), \bX)],
\]
which is independent of $\pi$. Thus, $\ell^\std$ is equivalent. 

Suppose next that an equivalent standard loss $\ell^\std$ exists. For fixed $P \in \cP$, define
\[
    \phi_P(\pi) := R_P(\pi; \ell^\add) - R_P(\pi; \ell^\std).
\]
By equivalence, $\phi_P(\pi)$ is independent of $\pi$. Let $\pi_d := d \in \cD$ denote the constant decision rule. Then, 
\[
    \phi_P(\pi_d) = \lt\{\E_P[\varpi(Y(0), \ldots, Y(K-1), \bX)] + \sum_{k \in \cD} \E_P[\omega_k(d, Y(k), \bX)]\rt\} - \E_P[\ell^\std(d; Y(d), \bX)],
\]
does not depend on $d \in \cD$. This implies that $\phi_P(\pi_d) - \phi_P(\pi_{d^\prime}) = 0$ for $d, d^\prime \in \cD$, or equivalently,
\begin{align}\label{eq:omega-equality}
    \sum_{k \in \cD} \E_P[\omega_k(d, Y(k), \bX) - \omega_k(d^\prime, Y(k), \bX)]  = \E_P[\ell^\std(d; Y(d), \bX) - \ell^\std(d^\prime; Y(d^\prime), \bX)].
\end{align}

Note that the left-hand side is a function of the joint vector of potential outcomes $(Y(0), \ldots, Y(K-1))$, while the right-hand side is only a function of $(Y(d), Y(d^\prime))$, which is a true subset when $K \geq 3$. Next, fix $k \in \cD$. Since $K \geq 3$ the set $\cD \setminus \{k\}$ contains at least two decisions $d, d^\prime$. Fix $\bx \in \cX$ and let $\by, \by^\prime \in \cY^\cD$ only differ in their $k$-th coordinate, i.e., $y_j = y^\prime_j$ for $j \neq k$ and $y_k \neq y^\prime_k$. Set $e(d; \bx) = 1/K$ for $d \in \cD$ and define the joint laws,
\begin{align*}
    P(D = d, Y(\cD) & = \widetilde \by, \bX \in A) = \frac{1}{K}
                      \indicator{\widetilde \by = \by}
                      \delta_{\bx}(A), \\
  P^\prime(D = d, Y(\cD) & = \widetilde \by, \bX \in A) = \frac{1}{K} \indicator{\widetilde \by = \by^\prime} \delta_{\bx}(A),
\end{align*}
where $A \subseteq \cX$ and $\delta_{\bx}(\cdot)$ is the Dirac measure
at $\bX = \bx$. Then $P, P^\prime \in \cP$. Substituting $P, P^\prime$
into Equation~\eqref{eq:omega-equality} and recalling that $k \notin
\{d, d^\prime\}$ yield,
\begin{align*}
    0 &= [\phi_P(\pi_d) - \phi_P(\pi_{d^\prime})]- [\phi_{P^\prime}(\pi_d) - \phi_{P^\prime}(\pi_{d^\prime})] 
    \\
    &= \lt[\sum_{j \in \cD} \omega_j(d, y_j, \bx) - \omega_j(d^\prime, y_j, \bx)\rt] - \lt[\sum_{j \in \cD} \omega_j(d, y_j^\prime, \bx) - \omega_j(d^\prime, y_j^\prime, \bx)\rt]
    \\
    &= \lt[\omega_k(d,y_k,\bx)-\omega_k(d^\prime,y_k,\bx)\rt] - \lt[\omega_k(d,y^\prime_k,\bx)-\omega_k(d^\prime,y^\prime_k,\bx)\rt]
\end{align*}
which implies
$\omega_k(d,y_k,\bx)-\omega_k(d^\prime,y_k,\bx) =
\omega_k(d,y^\prime_k,\bx)-\omega_k(d^\prime,y^\prime_k,\bx)$.  Since
$y_k, y_k^\prime$ were chosen arbitrarily,
$\omega_k(d, y, \bx) - \omega_k(d^\prime, y, \bx)$ does not depend on
$y \in \cY$. Since $k$, $d, d^\prime$, and $\bx$ were arbitrary,
Lemma~\ref{lem:off_diagonal_separability} implies that all
off-diagonal weights are additively separable.

Finally, uniqueness of the equivalent standard loss follows from the same argument as in the proof of Proposition~\ref{pro:binary_add_std_equiv}.
\qed

\subsection{Proof of Proposition~\ref{pro:all_cf_loss_is_add}}
\label{app:all_cf_loss_is_add}

Fix $d \in \cD, m \in \{0, \ldots, K\}$ and $\bx \in \cX$. Define $\ell^\add$ as the additive loss as in Definition~\ref{def:additive} given by the weights in Proposition~\ref{pro:all_cf_loss_is_add} without intercept. Recall that $s^m = \{\indicator{k \geq m}\}_{k \in \cD} \in \cY^\cD_{\mathrm{mon}}$ such that
\begin{align*}
    \ell^{\add}(d;s^m,\bx) &= \sum_{k \in \cD}  \omega_k(d; s_k^m, \bx)
    \\
    &= \sum_{k \in \cD} \lt\{\frac{1}{K} \ell(d; s^K, \bx) + s_k^m [\ell(d; s^k, \bx) - \ell(d; s^{k+1}, \bx)]\rt\}
    \\
    &= \ell(d; s^K, \bx) + \sum_{k = m}^{K-1} \{\ell(d; s^k, \bx) - \ell(d; s^{k+1}, \bx)\}
    \\
    &= \ell(d; s^K, \bx) + \ell(d; s^m, \bx) - \ell(d; s^K, \bx)
    \\
    &= \ell(d; s^m, \bx).
\end{align*}
Since under monotonicity $(Y(0), \ldots, Y(K-1))$ is contained in $\cY^\cD_{\mathrm{mon}} = \{s^0,\ldots,s^K\}$, the claim follows.

\subsection{Proof of Corollary~\ref{cor:bin_standard}}
\label{app:bin_standard}

By Theorem~\ref{thm:additivity_marginal_risk}, the conditional counterfactual risk can be written as
\begin{align}
    R_{P_{Y(\cD) \mid \bX = \bx}}(\pi; \ell^\add) &= \sum_{d\in \cD} \sum_{k\in \cD} \sum_{y = 0}^1 \omega_k(d, y, \bx) \cdot \pi(d; \bx) \cdot P(Y(k) = y\mid \bX = \bx) + C_P(\bx)
    \notag
    \\
    &= \sum_{d\in \cD} \sum_{k\in \cD} \sum_{y = 0}^1 \omega_k(d, y, \bx) \cdot P^\pi(D^\pi = d, Y(k) = y\mid \bX = \bx) + C_P(\bx), \label{eq:2conditional}
\end{align}
because $P^\pi(D^\pi = d, Y(k) = y\mid \bX = \bx) = \pi(d; \bx) \cdot P(Y(k) = y\mid \bX = \bx)$.

Note that
\begin{align*}
    P^\pi(D^\pi = d, Y(k) = 0\mid \bX = \bx) =  P^\pi(D^\pi = d\mid \bX = \bx) -  P^\pi(D^\pi = d, Y(k) = 1\mid \bX = \bx).
\end{align*}
Substituting this expression into the first term of Equation~\eqref{eq:2conditional} yields,
\begin{align*}
    &\sum_{d \in \cD} \sum_{k\in\cD} \sum_{y=0}^1 \omega_k(d, y, \bx) P^\pi(D^\pi = d, Y(k) = y\mid \bX = \bx) \\
   = \ & \sum_{d \in \cD} \sum_{k\in\cD} \omega_k(d, 0, \bx) P^\pi(D^\pi = d, Y(k) = 0\mid \bX = \bx) + \omega_k(d, 1, \bx) P^\pi(D^\pi = d, Y(k) = 1\mid \bX = \bx) \\
    = \ & \sum_{d \in \cD} \sum_{k\in\cD} \Big\{ \omega_k(d, 0, \bx) \lt[P^\pi(D^\pi = d\mid \bX = \bx) - P^\pi(D^\pi = d, Y(k) = 1\mid \bX = \bx)\rt] \\
    & \quad + \omega_k(d, 1, \bx) P^\pi(D^\pi = d, Y(k) = 1\mid\bX = \bx) \Big\}
    \\
   = \ & \sum_{d \in \cD} \sum_{k\in\cD} \left\{\omega_k(d, 0, \bx) P^\pi(D^\pi = d\mid \bX = \bx) + [\omega_k(d, 1, \bx) - \omega_k(d, 0, \bx)] P^\pi(D^\pi = d, Y(k) = 1\mid \bX = \bx)\right\} \\
    = \ & \sum_{d \in \cD} \xi(d, \bx) P^\pi(D^\pi = d\mid \bX = \bx) + \sum_{d \in \cD} \sum_{k\in\cD} \zeta_k(d, \bx) P^\pi(D^\pi = d, Y(k) = 1\mid \bX = \bx) \\
    = \ & \sum_{d \in \cD} \xi(d, \bx) P^\pi(D^\pi = d\mid \bX = \bx) + \sum_{d\in \cD} \sum_{k\in \cD, k\neq d} \zeta_{k}(d, \bx) P^\pi(D^\pi = d, Y(k) = 1\mid \bX = \bx) \\
    &+ \sum_{d \in \cD} \zeta_{d}(d, \bx) P^\pi(D^\pi = d, Y(d) = 1\mid \bX = \bx),
\end{align*}
where $\zeta_{k}(d, \bx) = \omega_k(d, 1, \bx) - \omega_k(d, 0, \bx)$, $\xi(d, \bx) = \sum_{k \in \cD}\omega_k(d, 0, \bx)$. Combining this with Equation~\eqref{eq:2conditional} proves the claim.
\qed

\subsection{Lemmas and their Proofs}
\label{app:lemmas}

\subsubsection{Lemma~\ref{lem:equivalence_of_identifiability}}
\label{app:equivalence_of_identifiability}

\begin{lemma}\label{lem:equivalence_of_identifiability}
Let $\cP$ denote the set of all joint probability distributions under consideration, and let \( \cQ \) be the set of all observable distributions. Let \( Q: \cP \to \mathcal{Q} \) denote a surjective mapping that associates each joint distribution with an observable distribution. Let \( \theta(P) \) denote the (causal) estimand of interest under distribution \( P \in \cP \). For a fixed observable distribution \( Q_0 \in \mathcal{Q} \) define 
$\mathcal{S}_0 = \{ P \in \cP : Q(P) = Q_0 \}.$
Then, the following statements are equivalent:
\begin{enumerate}
\item[\textnormal{(a)}] For every $Q_0\in\mathcal{Q}$ there exists a constant
      $\theta_0$ such that $\theta(P)=\theta_0$ for all $P\in\mathcal{S}_{0}$.
\item[\textnormal{(b)}] There exists a function $h$ such that $\theta(P)  = h(Q(P))$ for every $P \in \cP$.
\end{enumerate}
\end{lemma}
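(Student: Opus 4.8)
The plan is to establish the equivalence by proving the two implications separately, both of which are instances of the general principle that a function factors through a map precisely when it is constant on the fibers of that map.

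For the direction (b) $\Rightarrow$ (a), the argument is immediate. Assume there is a function $f$ with $\theta(P) = f(Q(P))$ for all $P \in \mathcal{P}$. Fix any $Q_0 \in \mathcal{Q}$ and set $\theta_0 := f(Q_0)$. Then for every $P \in \mathcal{S}_0$ we have $Q(P) = Q_0$ by definition of $\mathcal{S}_0$, so $\theta(P) = f(Q(P)) = f(Q_0) = \theta_0$. Since $\theta_0$ does not depend on the particular $P$ chosen in the fiber, this is exactly statement (a).

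For the direction (a) $\Rightarrow$ (b), the task is to \emph{construct} the factoring function $f$ on all of $\mathcal{Q}$ and verify the identity. Here I would use surjectivity of $Q$: for each $Q_0 \in \mathcal{Q}$, the fiber $\mathcal{S}_0 = \{P \in \mathcal{P} : Q(P) = Q_0\}$ is nonempty, so I can pick some $P_{Q_0} \in \mathcal{S}_0$ and define $f(Q_0) := \theta(P_{Q_0})$. The key point is that this assignment does not depend on the chosen representative: by hypothesis (a), there is a constant $\theta_0$ with $\theta(P) = \theta_0$ for all $P \in \mathcal{S}_0$, so $\theta(P_{Q_0}) = \theta_0$ regardless of the choice of $P_{Q_0}$, and $f$ is well-defined. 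To finish, take an arbitrary $P \in \mathcal{P}$ and let $Q_0 = Q(P)$; then $P \in \mathcal{S}_0$, so by (a) and the definition of $f$ we get $f(Q(P)) = f(Q_0) = \theta_0 = \theta(P)$, which is the desired representation.

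The only substantive point — and the step I would flag as the crux — is the well-definedness of $f$ in the second implication, and this is supplied essentially verbatim by hypothesis (a): constancy of $\theta$ on each fiber is exactly what guarantees that the value $f(Q_0)$ is unambiguous. Surjectivity of $Q$ plays the complementary role of ensuring that $f$ is defined on all of $\mathcal{Q}$ (every $Q_0$ has a nonempty fiber to read the value off of). No further structure on $\mathcal{P}$, $\mathcal{Q}$, or $\theta$ is needed, so I would not expect any genuine obstacle beyond keeping the fiber bookkeeping straight.
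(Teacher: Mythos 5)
Your proof is correct and follows essentially the same route as the paper's: the (b)~$\Rightarrow$~(a) direction is immediate, and for (a)~$\Rightarrow$~(b) you define $f$ on each observable distribution via the common value of $\theta$ on its (nonempty, by surjectivity) fiber, exactly as the paper does. The only cosmetic difference is that you phrase the definition through a chosen representative $P_{Q_0}$ and then check well-definedness, whereas the paper sets $f(Q_0):=\theta_0$ directly; these are the same argument.
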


\begin{proof}
We first prove that (b) implies (a). Fix any $Q_0\in\mathcal{Q}$ and let $P \in\mathcal{S}_{0}$ such that $Q(P)=Q_0$. Note that $\mathcal{S}_{0}$ is non-empty by $Q$ surjective. Then, by (b), there exists $h$ such that
\begin{align*}
    \theta(P) = h(Q(P)) = h(Q_0) = \theta_0,
\end{align*}
for every $P \in\mathcal{S}_{0}$. Because $Q_0$ was arbitrary, the claim follows.

We next prove that (a) implies (b). Let $Q_0\in\mathcal{Q}$. Surjectivity of $Q$ ensures $\mathcal{S}_{0}$ is non-empty.  By (a), there is a constant $\theta_0$ such that $\theta(P)=\theta_0$ for all $P\in\mathcal{S}_{0}$.  Define $h:\mathcal{Q}\to\R$ by setting $h(Q_0) := \theta_0$ for every such $Q_0 \in \mathcal{Q}$. Now, let $P \in \cP$ be an arbitrary probability measure and let $Q_0 = Q(P)$. Then, there exists $\theta_0 = h(Q_0)$ such that $\theta(P) = \theta_0 = h(Q_0) = h(Q(P))$. This proves the claim.
\end{proof}

\subsubsection{Lemma~\ref{lem:risk_identifiable_iff_conditional_risk_identifiable}}
\label{app:risk_identifiable_iff_conditional_risk_identifiable}

\begin{lemma}\label{lem:risk_identifiable_iff_conditional_risk_identifiable}
Fix a counterfactual loss $\ell$ satisfying Assumption~\ref{ass:bounded_loss} and a policy $\pi: \cX \to \Delta(\cD)$. Then the following are equivalent:
\begin{enumerate}
\item[\textnormal{(a)}] The counterfactual risk $R_P(\pi; \ell)$ is identifiable from $Q_P$ according to Definition~\ref{def:identifiability}.
\item[\textnormal{(b)}] The conditional counterfactual risk $R_{P_{Y(\cD) \mid \bX}}(\pi; \ell)$ is identifiable from $Q_P(D, Y \mid \bX)$, i.e., for every pair $P_1, P_2 \in \cP$ with common covariate marginal $P_{1, \bX} = P_{2, \bX} =: \mu$, whenever $Q_{P_1}(D, Y \mid \bX) = Q_{P_2}(D, Y \mid \bX)$ $\mu$-a.s., we also have $R_{P_{1, {Y(\cD) \mid \bX}}}(\pi, \ell) = R_{P_{2, {Y(\cD) \mid \bX}}}(\pi, \ell)$ $\mu$-a.s.
\end{enumerate}
\end{lemma}
\begin{proof}
We first prove that (a) implies (b). We use proof by contradiction. Suppose that $R_{P_{Y(\cD) \mid \bX}}(\pi, \ell)$ is not identifiable from $Q_P(D, Y \mid \bX)$. Then there exist two joint laws $P_1, P_2 \in \cP$ with common covariate marginal $P_{1, \bX} = P_{2, \bX} =: \mu$ such that the event
\[
    E:=\lt\{\bx \in \cX: Q_{P_1}(D, Y \mid \bX= \bx) = Q_{P_2}(D, Y \mid \bX= \bx)\rt\},
\]
has $\mu(E) = 1$, and the event
\[
A:= E \cap \lt\{\bx \in \cX: R_{P_{1, {Y(\cD) \mid \bX = \bx}}}(\pi; \ell) - R_{P_{2, {Y(\cD) \mid \bX =\bx}}}(\pi; \ell) \neq 0 \rt\}
\]
has $\mu(A) > 0$. Note that $A$ is measurable, because $\phi(\bx) := R_{P_{1, {Y(\cD) \mid \bX = \bx}}}(\pi; \ell) - R_{P_{2, {Y(\cD) \mid \bX = \bx}}}(\pi; \ell)$ is measurable as the difference of two measurable functions.

Let $A_+ = \{\bx \in A: \phi(\bx) > 0\}$ and $A_- = \{\bx \in A: \phi(\bx) < 0\}$, then $A = A_+ \cup A_-$. Since $\mu(A) > 0$ at least one of $A_+, A_-$ has positive $\mu$-measure. Without loss of generality assume $\mu(A_+) >0$ and define the covariate law $\nu(B) := \frac{\mu(B \cap A_+)}{\mu(A_+)}$ for measurable $B \subseteq \cX$. Then $\nu \ll \mu, \nu(A_+) = 1$ and $\phi(\bx) > 0$ $\nu$-a.s.

Therefore, since $\ell$ is bounded by assumption, 

\[
    \int_{\cX} \phi(\bx) d\nu(\bx)  > 0.
\]
Define the joint probability measures $P^\nu_i$ for $i = 1, 2$,
\[
    P^\nu_i(D, Y(\cD), \bX) = P_i(D, Y(\cD) \mid \bX) \cdot\nu(\bX).
\]
Since $P_{i, \bX}^\nu = \nu \ll \mu = P_{i, \bX}$, $P_i^\nu$ satisfies Assumption~\ref{ass:ignorability} and thus $P_i^\nu \in \cP$ for $i = 1, 2$.
Moreover, since the marginal distribution of $\bX$ does not affect the conditional risk, we have
\[
    R_{P_{i, {Y(\cD) \mid \bX}}}(\pi; \ell) =  R_{P^\nu_{i, {Y(\cD) \mid \bX}}}(\pi; \ell) \qquad \nu\text{-a.s.}
\]
for $i = 1, 2$. Hence
\begin{align}\label{eq:lem_identifiability}
    R_{P^\nu_1}(\pi, \ell) - R_{P^\nu_2}(\pi, \ell) &= \int_{\bX} R_{P^\nu_{1, {Y(\cD) \mid \bX = \bx}}}(\pi; \ell) -  R_{P^\nu_{2, {Y(\cD) \mid \bX = \bx}}}(\pi; \ell) d\nu(\bx)
    \notag
    \\
    &= \int_{\bX} R_{P_{1, {Y(\cD) \mid \bX = \bx}}}(\pi; \ell) -  R_{P_{2, {Y(\cD) \mid \bX = \bx}}}(\pi; \ell) d\nu(\bx)
    \notag
    \\
    &= \int_{\bX} \phi(\bx) d\nu(\bx) 
    \notag
    \\
    &> 0.
\end{align}
On the other hand, by construction, 
\[
    Q_{P^\nu_1}(D, Y, \bX)=Q_{P_1}(D, Y \mid \bX) \cdot \nu(\bX) = Q_{P_2}(D, Y \mid \bX) \cdot \nu(\bX) = Q_{P^\nu_2}(D, Y, \bX).
\]
Since $R_P(\pi, \ell)$ is identifiable and $Q_{P^\nu_1} = Q_{P^\nu_2}$, Definition~\ref{def:identifiability} implies that $R_{P^\nu_1}(\pi, \ell)$ = $R_{P^\nu_2}(\pi, \ell)$, contradicting Equation~\eqref{eq:lem_identifiability}. Thus, $R_{P_{Y(\cD) \mid \bX}}(\pi, \ell)$ must be identifiable from $Q_{P}(D, Y \mid \bX)$.

We next prove that (b) implies (a). Let $P_1, P_2 \in \cP$ with $Q_{P_1} = Q_{P_2}$. This implies that $P_{1, \bX} = P_{2, \bX} =: \mu$ and 
\[
    Q_{P_1}(D, Y \mid \bX= \bx) = Q_{P_2}(D, Y \mid \bX= \bx) \qquad \mu\text{-a.s.}
\]
Since the conditional risk is identifiable, this implies that
\[
    R_{P_{1, {Y(\cD) \mid \bX}}}(\pi; \ell) =  R_{P_{2, {Y(\cD) \mid \bX}}}(\pi; \ell) \qquad \mu\text{-a.s.}
\]
Therefore
\begin{align*}
    R_{P_1}(\pi, \ell) &= \int_{\bX} R_{P_{1, {Y(\cD) \mid \bX = \bx}}}(\pi; \ell) d\mu(\bx)
    \notag
    \\
    &=  \int_{\bX} R_{P_{2, {Y(\cD) \mid \bX = \bx}}}(\pi; \ell) d\mu(\bx)
    \notag
    \\
    &= R_{P_2}(\pi, \ell),
\end{align*}
which proves (a) by Definition~\ref{def:identifiability}.
\end{proof}

\subsubsection{Lemma~\ref{lem:marginalization}}
\begin{lemma}\label{lem:marginalization}
Associate the space of all joint laws $\Delta(\cY^{\cD})$ of $Y(\cD)  := (Y(0),\ldots,Y(K-1))$ with the simplex $\Delta_{M^K - 1} = \{\bp \in \R^{M^K} :  \sum_{\by \in \cY^\cD} p_{\by} = 1 \text{ and } p_{\by} \geq 0\}$. For $\bp \in \Delta_{M^K - 1}$ define the vector of marginal probabilities,
\[
    \bq(\bp) = \lt\{ \sum_{\by_{-k} \in \cY^{K-1}} p_{y, \by_{-k}} \rt\}_{k \in \cD, y \in \cY},
\]
where $\by_{-k}=(y_0,\ldots,y_{k-1},y_{k+1},\ldots,y_{K-1})$, so that $\bq(\bp)$ collects $\{P(Y(k)=y)\}_{k\in\cD,\ y\in\cY}$ from $\bp = \{P(Y(\cD) = \by)\}_{\by \in \cY^\cD}$

Define the matrix $\bC \in \{0, 1\}^{KM \times M^K}$ with rows indexed by $i(k, y)$ and columns indexed by $j(\by)$ defined via
\[
    C_{i(k, y), j(\by')} = \indicator{y'_k = y}.
\]
Then $\bq(\bp) = \bC \bp$ for all $\bp \in \Delta_{M^K - 1}$.
Moreover,
\[
     \mathrm{ker}(\bC) = \lt\{\bv \in \R^{M^K}: \sum_{\by_{-k}\in\cY^{K-1}}
      v_ {y,\by_{-k}}=0
      \ \text{for all } k\in\cD, y \in\cY \rt\}.
\]
In particular $\mathbf{1}^\top \bv = 0$ for all $\bv \in \mathrm{ker}(\bC)$ and $\bC$ is surjective from $\Delta(\cY^\cD) $ onto $(\Delta(\cY))^\cD$.
\end{lemma}

\begin{proof}
    For every joint probability distribution $P(Y(\cD))$, we can express its marginals $\{P(Y(k) = y)\}_{k \in \cD, y \in \cY}$ as,
    \begin{align}\label{eq:row_dy}
         P(Y(k) = y) = \sum_{\by^\prime\in \cY^\cD} \indicator{y^\prime_k = y} P(Y(\cD) = \by^\prime).
    \end{align}
    Define the matrix $\bC \in \{0, 1\}^{KM \times M^K}$ with rows indexed by $i(k, y)$ and columns index by $j(\by)$ via
    \begin{align*}
        C_{i(k, y), j(\by')} = \indicator{y'_k = y}.
    \end{align*}
    Then, by associating $\bp \in \Delta_{M^K-1}$ with $\{P(Y(\cD) = \by)\}_{\by \in \cY^\cD}$ and $\bq = \bq(\bp)$ with its marginal $\{P(Y(k)=y)\}_{k\in\cD,\ y\in\cY}$,  Equation~\eqref{eq:row_dy} can be written as,
    \[
        (\bC \bp)_{k, y} = \sum_{\by^\prime\in \cY^\cD} \indicator{y^\prime_k = y} p_{\by^\prime} = \sum_{\by_{-k} \in \cY^{K-1}} p_{y, \by_{-k}} = q_{k, y},
    \]
    implying $\bq(\bp)=\bC\bp$. Since for any collection of marginal distributions
    \(\{q_{k,y}\}_{y\in\cY}\in\Delta(\cY)\), \(k\in\cD\), the product law
    \[
        p_{\by}=\prod_{k\in\cD}q_{k,y_k}
    \]
    has these marginals, the restriction of \(\bC\) to
    \(\Delta(\cY^\cD)\) is surjective onto \((\Delta(\cY))^\cD\).

    We proceed to examine the kernel of the matrix $\bC$. Let $\bv = (v_{\by})_{\by \in \cY^\cD} \in \R^{M^K}$ then
    \[
        (\bC \bv)_{k, y} = \sum_{\by^\prime\in \cY^\cD} \indicator{y^\prime_k = y} v_{\by^\prime} = \sum_{\by_{-k} \in \cY^{K-1}} v_{y, \by_{-k}}.
    \]
    Therefore $\bC \bv = 0$ if and only if all marginals vanish, which proves the claim. Finally, if $\bv \in \mathrm{ker}(\bC)$, then for any $k \in \cD$
    \begin{align*}
    \mathbf{1}^\top\bv = \sum_{\by \in \cY^\cD} v_{\by} = \sum_{y \in \cY}  \sum_{\by_{-k}\in\cY^{K-1}} v_{y, \by_{-k}} = 0.
\end{align*}
\end{proof}

\subsubsection{Lemma~\ref{lem:additivity}}

\begin{lemma}\label{lem:additivity}
Associate the space of all joint laws $\Delta(\cY^{\cD})$ of $Y(\cD)  := (Y(0),\ldots,Y(K-1))$ with the simplex $\Delta_{M^K - 1} = \{\bp \in \R^{M^K} :  \sum_{\by \in \cY^\cD} p_{\by} = 1 \text{ and } p_{\by} \geq 0\}$. Let $\bC$ be the marginalization matrix from Lemma~\ref{lem:marginalization} and define $\bC(\Delta_{M^K -1}) = \{\bq \in \R^{KM}: \exists \bp \in \Delta_{M^K-1} \text{ such that } \bq = \bC\bp \}$, the set of marginal vectors induced by joint laws on $\cY^\cD$. Let $\ba = (a_{\by})_{\by \in \cY^\cD} \in \R^{M^K}$. Then the following are equivalent: 
\begin{enumerate}[label=\textnormal{(\alph*)}, ref=\textnormal{(\alph*)}]
    \item There exists a well-defined function $f_{\ba}: \bC(\Delta_{M^K -1}) \to \R$ such that $f_{\ba}(\bq) = f_{\ba}(\bq(\bp)) = f_{\ba}(\bC \bp) = \ba^\top \bp$ for every $\bp \in \Delta_{M^K - 1}$.
    \item \label{lem:additivity_b} For all $\bp, \bp^\prime \in \Delta_{M^K - 1}$ with $\bC \bp = \bC \bp^\prime$ we have $\ba^\top \bp = \ba^\top \bp^\prime$.
    \item $\ba \in \mathrm{im}(\bC^\top)$.
    \item \label{lem:additivity_d} There exist $w_{k, y} \in \R$, $k \in \cD, y \in \cY$ such that $a_{\by} = \sum_{k \in \cD} w_{k, y_k}$ for every $\by \in \cY^\cD$.
\end{enumerate}
\end{lemma}

\begin{proof}
We first show that (a) is equivalent to (b). Assume (a) holds for a function $f_{\ba}$ and let $\bp, \bp^\prime \in \Delta_{M^K - 1}$ with $\bC \bp = \bC \bp^\prime$. Then
\[
    \ba^\top \bp = f_{\ba}(\bC \bp) = f_{\ba}(\bC \bp^\prime) = \ba^\top \bp^\prime,
\]
which proves (b). Conversely, let (b) hold then the function $f_{\ba}: \bC(\Delta_{M^K -1}) \to \R$ with $f_{\ba}(\bC \bp) = \ba^\top \bp$ is well-defined as all $\bp, \bp^\prime \in \Delta_{M^K - 1}$ with $\bC \bp = \bC \bp^\prime$ map to the same value. This proves (a). 

Next we show that (b) implies (c). Assume (c) does not hold  such that $\ba \notin \mathrm{im}(\bC^\top)$. By standard linear algebra \citep[Chapter 3, Theorem 5]{Lax2007}, $\mathrm{im}(\bC^\top) = \mathrm{ker}(\bC)^{\perp}$. This implies that $\exists \bv = (v_{\by})_{\by \in \cY^\cD} \in \text{ker}(\bC) \setminus \{\bm{0}\}: \ba^{\top}\bv\neq 0$. 
Let $\bp = (p_{\by})_{\by \in \cY^\cD}\in \mathring{\Delta}_{M^K - 1}$ be in the interior of the simplex with strictly positive entries. Define
\begin{align*}
    \bp^\prime = \bp + \alpha \bv, \quad \text{with} \ 0 < \alpha < \frac{\min_{\by \in \cY^\cD} p_{\by}}{\max_{\by \in \cY^\cD} \abs{v_{\by}}}.
\end{align*}
Note that such an $\alpha$ exists. Indeed, by assumption, $0 < p_{\by}$ for all entries and $\bv$ has at least one non-zero entry as $\bv\neq \bm{0}$. Observe,
\begin{align*}
    \min_{\by \in \cY^\cD} p^\prime_{\by} \geq \min_{\by \in \cY^\cD} p_{\by} - \alpha \max_{\by \in \cY^\cD} \abs{v_{\by}} > 0,
\end{align*}
and,
\begin{align*}
    \sum_{\by \in \cY^\cD} p^\prime_{\by} = \sum_{\by \in \cY^\cD} p_{\by} + \alpha \sum_{\by \in \cY^\cD} v_{\by} = \sum_{\by \in \cY^\cD} p_{\by} = 1,
\end{align*}
where the second equality follows from $\mathbf{1}^\top \bv = 0$ by $\bv  \in \text{ker}(\bC)$ and Lemma~\ref{lem:marginalization}. Thus, $\bp^\prime \in \Delta_{M^K - 1}$ represents a law in $\Delta(\cY^\cD)$.

Because $\bv \in \text{ker}(\bC)$,
\begin{align*}
    \bC\bp^\prime = \bC\bigl(\bp+\alpha\bv\bigr)=\bC\bp.
\end{align*}
By (b) this implies $\ba^\top \bp = \ba^\top \bp^\prime$ and thus,
\begin{align*}
    0 = \ba^\top \bp^\prime - \ba^\top \bp = \ba^\top (\bp^\prime - \bp) = \alpha \ba^\top \bv \neq 0,
\end{align*}
as $\alpha \neq 0$ by construction and $\ba^\top \bv \neq 0$ by assumption. This yields a contradiction and proves (c).

Next we show that (c) implies (d). If $\ba \in \mathrm{im}(\bC^\top)$ then $a = \bC^\top \bw$ for some $\bw = (w_{k, y})_{k \in \cD, y \in \cY} \in \R^{KM}$. By Definition of $\bC$ in Lemma~\ref{lem:marginalization} this implies,
\[
    a_{\by} = (\bC^\top \bw)_{\by} = \sum_{k \in \cD} \sum_{y^\prime \in \cY} \indicator{y_k = y^\prime }w_{k, y^\prime} = \sum_{k \in \cD} w_{k, y_k},
\]
which proves (d).

Finally, we show that (d) implies (b). Let $\bp, \bp^\prime \in \Delta_{M^K - 1}$ with $\bC \bp = \bC \bp^\prime$. By (d),
\begin{align*}
    \ba^\top \bp &= \sum_{\by \in \cY^\cD} a_{\by} p_{\by} 
    \\
    &= \sum_{\by \in \cY^\cD} \lt( \sum_{k \in \cD} w_{k, y_k} \rt) p_{\by} 
    \\
    &= \sum_{k \in \cD} \sum_{\by \in \cY^\cD} w_{k, y_k} p_{\by}
    \\
    &= \sum_{k \in \cD} \sum_{y \in \cY} w_{k, y} \sum_{\by_{-k} \in \cY^{K-1}} p_{y, \by_{-k}}
    \\
    &= \sum_{k \in \cD} \sum_{y \in \cY} w_{k, y} (\bC \bp)_{k, y}
    \\
    &= \sum_{k \in \cD} \sum_{y \in \cY} w_{k, y} (\bC \bp^\prime)_{k, y}
    \\
    &= \sum_{k \in \cD} \sum_{y \in \cY} w_{k, y} \sum_{\by_{-k} \in \cY^{K-1}} p^\prime_{y, \by_{-k}}
    \\
    &= \sum_{k \in \cD} \sum_{\by \in \cY^\cD} w_{k, y_k} p^\prime_{\by} 
    \\
    &= \sum_{\by \in \cY^\cD} \lt( \sum_{k \in \cD} w_{k, y_k} \rt) p^\prime_{\by} 
    \\
    &= \sum_{\by \in \cY^\cD} a_{\by} p^\prime_{\by} 
    \\
    &= \ba^\top \bp^\prime,
\end{align*}
which proves (b) and completes the proof.
\end{proof}

\subsubsection{Lemma~\ref{lem:off_diagonal_separability}}
\begin{lemma}\label{lem:off_diagonal_separability}
Let $\abs{\cD} \geq 3$ and fix $k \in \cD, \bx \in \cX$. Consider the weight function $\omega_k: \cD \times \cY \times \cX\to \R$. Then the following are equivalent:
\begin{enumerate}
    \item[\textnormal{(a)}] For all $d, d^\prime \in \cD \setminus \{k\}$ the difference $\omega_k(d, y, \bx) - \omega_k(d^\prime, y, \bx)$ does not depend on $y \in \cY$.
    \item[\textnormal{(b)}] There exist functions $f_k: \cD\setminus \{k\} \times \cX \to \R$ and $g_k: \cY \times \cX \to \R$ such that $\omega_k(d, y, \bx) = f_k(d, \bx) + g_k(y, \bx)$ for all $d \in \cD \setminus \{k\}$ and $y \in \cY$.
\end{enumerate}
\end{lemma}
\begin{proof}
    We first prove that (a) implies (b). Fix $d_0 \in \cD \setminus \{k\}$ and define $f_k(d, \bx) := \omega_k(d, y, \bx) - \omega_k(d_0, y, \bx)$ for any fixed $y \in \cY$. By assumption, $f_k(d, \bx)$ does not depend on $y$ for $d \neq k$. Next, define $g_k(y, \bx) := \omega_k(d_0, y, \bx)$. Then
    \[
        \omega_k(d, y, \bx) = f_k(d, \bx) + \omega_k(d_0, y, \bx) = f_k(d, \bx) + g_k(y, \bx),
    \]
    for $d \neq k$, which proves the claim.
    We next prove that (b) implies (a). If $\omega_k(d, y, \bx) =
    f_k(d, \bx) + g_k(y, \bx)$ for all $d \in \cD \setminus \{k\}$,
    then for any $d, d^\prime \in \cD \setminus \{k\}$ we have
    \[
      \omega_k(d, y, \bx) -   \omega_k(d^\prime, y, \bx) = f_k(d, \bx) + g_k(y, \bx) - (f_k(d^\prime, \bx) + g_k(y, \bx)) = f_k(d, \bx) - f_k(d^\prime, \bx),
    \]
    which is independent of $y$. This proves the claim.
\end{proof}

\section{Derivation of Weights}\label{app:weights}
We illustrate the results from Section~\ref{sec:symbolic} by working out the linear systems in Equations~\eqref{eq:linear_system}, \eqref{eq:linear_system_exact}, and \eqref{eq:linear_system_std} for the binary case \( D, Y \in \{0,1\} \). We suppress covariates for notational simplicity though all quantities may be read pointwise in $\bx$. In this setting, the loss can take eight distinct values $\ell(d; y_0, y_1)$, corresponding to two decisions across four principal strata.

Recall that we seek to solve the linear system
\begin{align*}
    \bA^b \bw^b = \bm{\ell},
\end{align*}
where $b \in \{\std, \exa, \add\}$ indexes the standard, exactly identifiable, and additive cases, respectively, as described in Section~\ref{sec:symbolic}.  
For $b = \add$, $\bA^\add \in \R^{8 \times 12}$ and the system is given by
\[
\bA^\add =
\left[
\begin{array}{cccccccccccc}
1 & 0 & 0 & 0 & 0 & 0 & 1 & 0 & 1 & 0 & 0 & 0 \\
0 & 0 & 1 & 0 & 1 & 0 & 0 & 0 & 1 & 0 & 0 & 0 \\
0 & 1 & 0 & 0 & 0 & 0 & 1 & 0 & 0 & 0 & 1 & 0 \\
0 & 0 & 1 & 0 & 0 & 1 & 0 & 0 & 0 & 0 & 1 & 0 \\
1 & 0 & 0 & 0 & 0 & 0 & 0 & 1 & 0 & 1 & 0 & 0 \\
0 & 0 & 0 & 1 & 1 & 0 & 0 & 0 & 0 & 1 & 0 & 0 \\
0 & 1 & 0 & 0 & 0 & 0 & 0 & 1 & 0 & 0 & 0 & 1 \\
0 & 0 & 0 & 1 & 0 & 1 & 0 & 0 & 0 & 0 & 0 & 1
\end{array}
\right], \qquad
    \bw^\add =
\begin{bmatrix}
\omega_0(0,0) \\
\omega_0(0,1) \\
\omega_1(1,0) \\
\omega_1(1,1) \\
\omega_0(1,0) \\
\omega_0(1,1) \\
\omega_1(0,0) \\
\omega_1(0,1) \\
\varpi(0,0) \\
\varpi(0,1) \\
\varpi(1,0) \\
\varpi(1,1)
\end{bmatrix},
\qquad
    \bm{\ell} =
\begin{bmatrix}
\ell(0; 0,0)\\[5pt]
\ell(1; 0,0)\\[5pt]
\ell(0; 1,0)\\[5pt]
\ell(1; 1,0)\\[5pt]
\ell(0; 0,1)\\[5pt]
\ell(1; 0,1)\\[5pt]
\ell(0; 1,1)\\[5pt]
\ell(1; 1,1)
\end{bmatrix}.
\]
The other cases are obtained by deleting columns from \(\bA^\add\):
\[
\bA^\exa = \bA^\add_{[:, 1:8]}, \quad \bw^\exa = \bw^\add_{1:8}, 
\qquad
\bA^\std = \bA^\add_{[:, 1:4]}, \quad \bw^\std = \bw^\add_{1:4},
\]
where \(\bA^\add_{[:, 1:k]}\) denotes the first \(k\) columns of matrix \(\bA^\add\), and \(\bw^\add_{1:k}\) the first \(k\) entries of vector \(\bw^\add\). Thus the exact system removes the intercept terms while the standard system keeps only the diagonal weights.
Since $\bA^b$ is fixed and known, we can solve the systems of linear equations either symbolically or numerically, depending on the representation of $\bm{\ell}$.

Furthermore, since $\bA^b$ is given, its left null space can be represented by a matrix $\bm{N}^b$ whose columns form a basis for $\ker\!\left((\bA^b)^\top\right)$. By standard linear algebra, $\bm{\ell} \in \mathrm{im}(\bA^b)$ (i.e., the system admits a solution) if and only if
\[
(\bm{N}^b)^\top \bm{\ell} = \bm{0}.
\]
This directly yields the equality constraints on $\ell(d; y_0,y_1)$ needed for identification in terms of principal strata.

In the standard case, i.e., $b = \std$, the null space can be represented as 
\[
\bm{N}^\std =
\left[
\begin{array}{cccc}
0 & -1 & 0 & 0  \\
-1 & 0 & 0 & 0 \\
0 & 0 & -1 & 0 \\
1 & 0 & 0 & 0 \\
0 & 1 & 0 & 0 \\
0 & 0 & 0 & -1 \\
0 & 0 & 1 & 0 \\
0 & 0 & 0 & 1
\end{array}
\right],
\]
with $\operatorname{rank}(\bm{N}^\std) = 4$. This recovers the four constraints of Equation~\eqref{eq:standard}, leaving four degrees of freedom to choose the loss $\ell$, as indicated in Table~\ref{tab:identifiability_binary_case}.

In the exactly identifiable case, i.e., $b = \exa$, the null space takes the form
\[
\bm{N}^\exa =
\left[
\begin{array}{cc}
1 & 0 \\
0 & 1 \\
-1 & 0 \\
0 & -1 \\
-1 & 0 \\
0 & -1 \\
1 & 0 \\
0 & 1 
\end{array}
\right],
\]
with $\operatorname{rank}(\bm{N}^\exa) = 2$, yielding the two constraints of Equation~\eqref{eq:exact} and leaving six degrees of freedom for the loss.

Finally, in the general additive case with $b = \add$, this reduces to
\[
\bm{N}^\add =
\left[
\begin{array}{c}
-1 \\
1  \\
1 \\
-1 \\
1 \\
-1 \\
-1 \\
1 
\end{array}
\right],
\]
with $\operatorname{rank}(\bm{N}^\add) = 1$, giving the single restriction of Equation~\eqref{eq:constant} and leaving seven degrees of freedom.

The above derivation illustrates how the identification conditions translate into explicit equality constraints on the loss values $\ell(d; y_0,y_1)$ in terms of principal strata.

\end{document}